\documentclass[leqno,11pt]{amsart} 
\setlength{\textheight}{23cm}
\setlength{\textwidth}{16cm}
\setlength{\oddsidemargin}{0cm}
\setlength{\evensidemargin}{0cm}
\setlength{\topmargin}{0cm}
\usepackage{amssymb}
%
%
%
\theoremstyle{plain} 
\newtheorem{theorem}{\indent\sc Theorem}[section]

\newtheorem{corollary}[theorem]{\indent\sc Corollary}
\newtheorem{proposition}[theorem]{\indent\sc Proposition}

\newtheorem{problem}[theorem]{\indent\sc Problem}
\theoremstyle{definition} 

\newtheorem{remark}[theorem]{\indent\sc Remark}

%

%

\def\C{{\mathbf{C}}}
\def\R{{\mathbf{R}}}
\def\H{{\mathbf{H}}}
\def\Z{{\mathbf{Z}}}
\def\Pi{{\mathbf{P}}}
\def\Lc{{\mathcal{L}}}
\def\RC{{\overline{\mathbf{C}}}} 

\begin{document}

\title[Value distribution of Gauss maps]{Value distribution for the Gauss maps of \\ various classes of surfaces} 

\author[Y. Kawakami]{YU KAWAKAMI}

\dedicatory{Dedicated to Professor Ryoichi Kobayashi on the occation of his sixtieth birthday}

\renewcommand{\thefootnote}{\fnsymbol{footnote}}
\footnote[0]{2010\textit{ Mathematics Subject Classification}.
Primary 53A10; Secondary 30D35, 53C42.}
%
%
\keywords{ 
Gauss map, Minimal surface, Front, Exceptional value, Unicity theorem
}
\thanks{ 
Partly supported by the Grant-in-Aid for Scientific Research (C), 
No. 15K04840, Japan Society for the Promotion of Science.}
\address{
Faculty of Mathematics and Physics, \endgraf
Institute of Science and Engineering, \endgraf 
Kanazawa University, \endgraf
Kanazawa, 920-1192, Japan
}
\email{y-kwkami@se.kanazawa-u.ac.jp}


\maketitle

\begin{abstract}
We present in this article a survey of recent results in value distribution theory for the Gauss maps 
of several classes of immersed surfaces in space forms, for example, minimal surfaces 
in Euclidean $n$-space ($n$=3 or 4), improper affine spheres in the affine 3-space and flat surfaces in 
hyperbolic 3-space. In particular, we elucidate the geometric background of their results. 
\end{abstract}

\section{Introduction} 
The geometric nature of value distribution theory of complex analytic maps is well-known. 
One of the most notable results is the geometric interpretation of the precise maximum `2' for the number 
of exceptional values of a nonconstant meromorphic function on the complex plane $\C$. Here we call a value that 
a function or map never attains an {\it exceptional value} of the function or map. In fact, Ahlfors \cite{Ah1935} and 
Chern \cite{Ch1960} proved that the least upper bound for the number of exceptional values of a nonconstant holomorphic map 
from $\C$ to a closed Riemann surface $\overline{\Sigma}_{\gamma}$ of genus $\gamma$ coincides with the Euler characteristic 
of $\overline{\Sigma}_{\gamma}$ by using Nevanlinna theory (see also \cite{Ko2003, Ne1970, NO1990, NW2013, Ru2001}). 
In particular, for a nonconstant meromorphic function on $\C$, the geometric meaning 
of the maximal number `2' of exceptional values is the Euler characteristic of the Riemann sphere $\RC :=\C\cup \{\infty\}$. 
We remark that if the closed Riemann surface is of $\gamma \geq 2$, then such a map does not exist because the Euler characteristic 
is negative. 

There exist several classes of immersed surfaces in 3-dimensional space forms whose Gauss maps have value-distribution-theoretic 
property. For instance, Fujimoto (\cite[Theorem I]{Fu1988}, \cite{Fu1997}) proved that the Gauss map of a nonflat 
complete minimal surface in Euclidean 3-space ${\R}^{3}$ can omit at most 4 values. Moreover, Fujimoto \cite{Fu1993} 
obtained a unicity theorem for the Gauss maps of 
nonflat complete minimal surfaces in ${\R}^{3}$, which is analogous to the Nevanlinna unicity theorem (\cite{Ne1926}) for meromorphic 
functions on $\C$. 
On the other hand, the author and Nakajo \cite{KN2012} showed that the maximal number of exceptional 
values of the Lagrangian Gauss map of a weakly complete improper affine front in the affine 3-space is 3, unless it is an elliptic 
paraboloid. Moreover, the author \cite{Ka2014} gave a similar result for flat fronts in hyperbolic 3-space ${\H}^{3}$. 

The purpose of this review paper is to give geometric interpretation of value-distribution-theoretic property for their Gauss maps. 
The paper is organized as follows: In Section \ref{M-results}, we first give a curvature bound for the conformal metric 
$ds^{2}=(1+|g|^{2})^{m}|\omega|^{2}$ on an open Riemann surface $\Sigma$, where $m$ is a positive integer, $\omega$ is a holomorphic 
1-form and $g$ is a meromorphic function $g$ on $\Sigma$ (Theorems \ref{thm2-1} and \ref{thm2-2}). 
As a corollary of it, we prove that the 
precise maximum for the number of exceptional values of $g$ on $\Sigma$ with the complete conformal metric $ds^{2}$ is `$m+2$' 
(Corollary \ref{thm2-3}). We note that the geometric interpretation of the `2' in `$m+2$' is the Euler characteristic of the 
Riemann sphere (Remark \ref{rmk2-1-1}). We also give a unicity theorem (Theorem \ref{thm2-5}) for $g$ on $\Sigma$ with the 
complete metric $ds^{2}$. In Section \ref{appli}, as application of our main results, we show some value-distribution-theoretic 
properties for the Gauss maps of the following classes of surfaces: minimal surfaces in ${\R}^{3}$ (Section \ref{appli-mini}), 
improper affine fronts in ${\R}^{3}$ (Section \ref{appli-improper}) and flat fronts in ${\H}^{3}$ (Section \ref{appli-flat}). 
Section \ref{sec-further} is denoted value-distribution-theoretic property for the Gauss map of a complete minimal surface 
in ${\R}^{4}$ (Section \ref{further-4}) and a complete minimal surface of finite total curvature in ${\R}^{3}$ 
(Section \ref{further-algebraic}). 

\section{Main results}\label{M-results}
We first give the following curvature bound for the conformal metric 
$$
ds^{2}=(1+|g|^{2})^{m}|\omega|^{2}
$$
on an open Riemann surface $\Sigma$. 

\begin{theorem}[\cite{Ka2013}]\label{thm2-1}
Let $\Sigma$ be an open Riemann surface with the conformal metric 
\begin{equation}\label{eq2-1}
ds^{2}=(1+|g|^{2})^{m}|\omega|^{2},  
\end{equation}
where $\omega$ is a holomorphic $1$-from, $g$ is a meromorphic function on $\Sigma$, and $m$ is a positive 
integer. Assume that $g$ omits $q\geq m+3$ distinct values. Then there exists a positive constant $C$, depending 
on $m$ and the set of exceptional values, but not the surface, such that for all $p\in \Sigma$, we have 
\begin{equation}\label{eq2-2}
|K_{ds^{2}}(p)|^{1/2}\leq \dfrac{C}{d(p)}, 
\end{equation} 
where $K_{ds^{2}}(p)$ is the Gaussian curvature of $ds^{2}$ at $p$ and $d(p)$ is the geodesic distance from $p$ to 
the boundary of $\Sigma$, that is, the infimum of the lengths of the divergent curves in $\Sigma$ emanating from $p$. 
\end{theorem}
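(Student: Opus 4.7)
The plan is to adapt Fujimoto's auxiliary-metric technique to the exponent $m$, combining a direct curvature computation with the Ahlfors--Schwarz--Yau lemma applied to a carefully weighted pseudo-metric built from the omitted values. The hypothesis $q\geq m+3$ will enter at exactly one point: it is what makes the exponents in the auxiliary metric admissible.

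First, I would compute the curvature of $ds^{2}$ directly. Writing $\omega=h\,dz$ in a local holomorphic coordinate and using $\Delta\log(1+|g|^{2})=4|g'|^{2}/(1+|g|^{2})^{2}$ together with $\Delta\log|h|^{2}=0$, one obtains
$$
|K_{ds^{2}}|^{1/2}=\frac{2\sqrt{m}\,|g'|}{(1+|g|^{2})^{(m+2)/2}\,|h|}.
$$
Hence \eqref{eq2-2} is equivalent to an explicit pointwise bound on this quantity by a constant times $1/d(p)$.

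Next I would construct Fujimoto's auxiliary (pseudo-)metric. Let $\alpha_{1},\ldots,\alpha_{q}$ be the omitted values. Since $q\geq m+3$, one can choose $\eta>0$ with $\eta q<q-m-2$, and set $\lambda=1/(q-m-2-m\eta)>0$. On the open set $\Sigma'=\{g'\neq 0\}$ define
$$
d\tau^{2}=\Bigl(\frac{|g'|}{(1+|g|^{2})^{(m+2)/2}\,\prod_{j=1}^{q}|g-\alpha_{j}|^{1-\eta}}\Bigr)^{2\lambda}\,ds^{2}.
$$
A direct computation of $\Delta\log(\text{density of }d\tau^{2})$, invoking the second main theorem-type identity $\sum_{j}\log|g-\alpha_{j}|$ harmonic away from the preimages, shows that the choice of $\lambda$ is precisely what produces the curvature estimate $K_{d\tau^{2}}\leq -1$ on $\Sigma'$; moreover, $d\tau^{2}$ extends continuously as a pseudo-metric across the zeros of $g'$ (where its density vanishes to a positive order $2\lambda$) and across the poles of $g$ (by a direct cancellation of the exponents around $\infty$, which again uses $q\geq m+3$).

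Finally I would apply the Ahlfors--Schwarz--Yau lemma. Because $d\tau^{2}\leq c\,ds^{2}$ pointwise (the density factor is bounded above, away from the $\alpha_{j}$'s, since $g$ never attains these values), the $ds^{2}$-geodesic disk of radius $d(p)$ centred at $p$ contains a $d\tau^{2}$-geodesic disk of radius $\gtrsim d(p)$. Ahlfors--Schwarz--Yau then gives $d\tau^{2}(p)\leq C_{1}/d(p)^{2}$; unwrapping the definition of $d\tau^{2}$ and using the bound on $\prod_{j}|g-\alpha_{j}|^{-(1-\eta)}$ (finite because the $\alpha_{j}$ are fixed and omitted) converts this into \eqref{eq2-2} with a constant $C$ depending only on $m$ and $\{\alpha_{1},\ldots,\alpha_{q}\}$, as required.

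The main obstacle is the construction and verification in the auxiliary-metric step: one must identify the single correct weighting of $(1+|g|^{2})$, $|g'|$ and the product $\prod_{j}|g-\alpha_{j}|^{1-\eta}$ that simultaneously (i) produces $K_{d\tau^{2}}\leq -1$, (ii) remains a well-defined pseudo-metric across the zeros of $g'$ and the poles of $g$, and (iii) can be compared to $ds^{2}$. The balance of exponents in (i) is exactly the inequality $q(1-\eta)-(m+2)>0$, which has a solution $\eta>0$ if and only if $q\geq m+3$; this is the geometric reason for the threshold in the hypothesis. The Ahlfors--Schwarz--Yau step is then standard once one checks that the singularities of $d\tau^{2}$ are mild enough to admit it, which is the secondary technical point.
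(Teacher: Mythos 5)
Your overall strategy (an auxiliary pseudo-metric plus an Ahlfors--Schwarz argument) is the right family of ideas --- it is Fujimoto's method, which \cite{Ka2013} adapts --- but the central step as you have written it fails. Write $\omega=h\,dz$ and let $d\tau^{2}=w^{2}|dz|^{2}$ be your auxiliary metric, so that
$$
\log w=\lambda\log|g'|+\frac{m-\lambda(m+2)}{2}\log(1+|g|^{2})-\lambda(1-\eta)\sum_{j=1}^{q}\log|g-{\alpha}_{j}|+\log|h|.
$$
As you yourself observe, $\log|g'|$, $\log|h|$ and each $\log|g-{\alpha}_{j}|$ are harmonic away from the zeros of $g'$ and the (nonexistent) ${\alpha}_{j}$-points, so on ${\Sigma}'=\{g'\neq 0\}$ one gets
$$
\partial\bar{\partial}\log w=\frac{m-\lambda(m+2)}{2}\cdot\frac{|g'|^{2}}{(1+|g|^{2})^{2}},
$$
a quantity that does not involve $q$, $\eta$ or the ${\alpha}_{j}$ at all. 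This yields $K_{d\tau^{2}}\leq 0$ when $\lambda<m/(m+2)$, but $K_{d\tau^{2}}\leq -1$ would require $2\bigl(m-\lambda(m+2)\bigr)|g'|^{2}/(1+|g|^{2})^{2}\geq w^{2}$, which is false in general: $w^{2}$ carries the factor $|h|^{2}$, which is independent of $g$, and near a zero of $g'$ the left-hand side vanishes to order $2$ while $w^{2}$ vanishes only to order $2\lambda<2$, so the curvature tends to $0$ there. In particular the exponent balance $q(1-\eta)-(m+2)>0$, which you identify as ``the geometric reason for the threshold,'' never actually enters your curvature computation --- a sure sign that an ingredient is missing. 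The missing ingredient is the logarithmic weight: the density must also contain a factor $\prod_{j}\bigl(\log(\delta/|g,{\alpha}_{j}|^{2})\bigr)^{-\lambda}$ (chordal distance, $\delta>1$ close to $1$). It is the term $\bigl|\partial\log|g,{\alpha}_{j}|^{2}\bigr|^{2}/\bigl(\log(\delta/|g,{\alpha}_{j}|^{2})\bigr)^{2}$ produced by $-\partial\bar{\partial}\log\log(\delta/|g,{\alpha}_{j}|^{2})$ that supplies a positive lower bound for $\Delta\log w$ in terms of $w^{2}$, and it is only when the powers of $(1+|g|^{2})$ are absorbed at that stage that $q(1-\eta)>m+2$, hence $q\geq m+3$, is genuinely used.

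The comparison step has the same defect in another guise. You assert $d\tau^{2}\leq c\,ds^{2}$ because $|g'|\,(1+|g|^{2})^{-(m+2)/2}\prod_{j}|g-{\alpha}_{j}|^{-(1-\eta)}$ is ``bounded above\dots since $g$ never attains these values,'' and later that $\prod_{j}|g-{\alpha}_{j}|^{-(1-\eta)}$ is finite for the same reason. Omitting a value does not bound $g$ away from it (the infimum of $|g-{\alpha}_{j}|$ over $\Sigma$ is typically $0$), and $|g'|$ is not controlled at all; a uniform bound on essentially this quantity \emph{is} the content of the theorem, so assuming it is circular. The way the argument is actually closed in \cite{Fu1988, Ka2013} is either to first prove, via the log-weighted negatively curved metric and the Ahlfors--Schwarz lemma on coordinate disks, the key estimate $|g'|\big/\bigl((1+|g|^{2})\prod_{j}|g,{\alpha}_{j}|^{1-\eta}\bigr)\leq C\,2R/(R^{2}-|z|^{2})$ on $\{|z|<R\}$, or to develop the \emph{flat} metric $|h|^{2/(1-\lambda)}\bigl(\prod_{j}|g-{\alpha}_{j}|^{1-\eta}/|g'|\bigr)^{2\lambda/(1-\lambda)}|dz|^{2}$ onto a maximal Euclidean disk; in either case the $ds^{2}$-length of a suitable divergent curve is then estimated against the radius, with the powers of $(1+|g|^{2})$ cancelling exactly because of the choice of $\eta$ and $\lambda$, and this is what ties the estimate to $d(p)$. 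Until the auxiliary metric is corrected and the boundedness claim replaced by one of these mechanisms, the proof does not go through.
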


More generally, when all of the multiple values of the meromorphic function $g$ in the metric (\ref{eq2-1}) are totally 
ramified, the following theorem holds. 

\begin{theorem}[\cite{Ka2015}]\label{thm2-2}
Let $\Sigma$ be an open Riemann surface with the conformal metric given by (\ref{eq2-1}). Let $q$ be a positve integer, 
${\alpha}_{1}, \ldots, {\alpha}_{q}\in \RC$ be distinct and 
${\nu}_{1}, \ldots, {\nu}_{q}\in {\Z}_{+}\cup \{\infty\}$. Assume that 
\begin{equation}\label{eq2-3}
\gamma := \displaystyle \sum_{j=1}^{q} \biggl{(}1-\dfrac{1}{{\nu}_{j}} \biggr{)}> m+2. 
\end{equation}
If $g$ satisfies the property that all ${\alpha}_{j}$-points of $g$ have multiplicity at least ${\nu}_{j}$, 
then  there exists a positive constant $C$, depending on $m$, $\gamma$ and ${\alpha}_{1}, \ldots, {\alpha}_{q}$ but 
not the surface, such that for all $p\in \Sigma$ inequality (\ref{eq2-2}) holds. 
\end{theorem}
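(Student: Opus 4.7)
The plan is to adapt the Fujimoto-style argument used in the proof of Theorem~\ref{thm2-1}: construct an auxiliary conformal pseudometric $d\hat\tau^{2}$ on $\Sigma$ whose Gaussian curvature on its smooth locus is bounded above by a negative constant, and then extract (\ref{eq2-2}) from the Ahlfors--Schwarz lemma applied to a holomorphic disk of $ds^{2}$-radius close to $d(p_{0})$ centered at $p_{0}$. Here the ramification hypothesis $\gamma>m+2$ plays the role that the omitting hypothesis $q\geq m+3$ plays in Theorem~\ref{thm2-1}.

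Having fixed a parameter $\lambda$ in the (nonempty, by hypothesis) interval $(1/\gamma,\ 1/(m+2))$, I would construct $d\hat\tau^{2}$ from $|g'|$, $(1+|g|^{2})$, $|\omega|$ and the factors $|g-\alpha_{j}|^{1-1/\nu_{j}}$ (with chordal analogues when $\alpha_{j}=\infty$ and $1-1/\nu_{j}$ interpreted as $1$ when $\nu_{j}=\infty$) raised to suitable $\lambda$-dependent powers. The ramification hypothesis forces $d\hat\tau^{2}$ to extend continuously to all of $\Sigma$: at an $\alpha_{j}$-point of multiplicity $\mu\geq\nu_{j}$, the zero of $|g'|$ of order $\mu-1$ cancels the divergence of $|g-\alpha_{j}|^{-(1-1/\nu_{j})}$ up to the nonnegative order $\mu/\nu_{j}-1$, while at zeros of $\omega$ or of $g'$ disjoint from the $\alpha_{j}$ the density merely vanishes. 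On the smooth locus, a direct $\partial_{z}\partial_{\bar z}\log$-calculation of the density---using $\partial_{z}\partial_{\bar z}\log(1+|g|^{2})=|g'|^{2}/(1+|g|^{2})^{2}$ together with the harmonicity of $\log|g'|^{2}$, $\log|h|^{2}$ and $\log|g-\alpha_{j}|^{2}$ off their zero sets---produces a Gaussian curvature whose critical coefficient, once $|g-\alpha_{j}|$ is rewritten in chordal-distance form $[g,\alpha_{j}]=|g-\alpha_{j}|/\sqrt{(1+|g|^{2})(1+|\alpha_{j}|^{2})}$ to expose the appearance of $\gamma$, is proportional to $(\gamma\lambda-1)$. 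If the $\lambda$-exponents in $d\hat\tau^{2}$ are arranged so that this coefficient multiplies a density dominating $d\hat\tau^{2}$ itself, the condition $\lambda>1/\gamma$ yields $K_{d\hat\tau^{2}}\leq -B<0$ uniformly on the smooth locus, with $B=B(\lambda,m,\gamma,\alpha_{1},\ldots,\alpha_{q})$; after an overall constant rescaling one may assume $B=1$.

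To conclude, for each $p_{0}\in\Sigma$ and $R<d(p_{0})$ a holomorphic map $\phi\colon D(R)\to\Sigma$, $ds^{2}$-isometric at $\phi(0)=p_{0}$, is produced by the standard conformal chart construction on an open Riemann surface. Ahlfors--Schwarz applied to $\phi^{\ast}d\hat\tau^{2}$ then gives a pointwise bound of order $R^{-2}$ at $z=0$. Converting to an intrinsic statement using $ds^{2}=(1+|g|^{2})^{m}|\omega|^{2}$ and substituting the identity $|K_{ds^{2}}|=2m|g'|^{2}/((1+|g|^{2})^{m+2}|h|^{2})$, the inequality $1/\lambda<m+2$ allows one to isolate $|K_{ds^{2}}(p_{0})|^{1/2}$; the limit $R\uparrow d(p_{0})$ yields (\ref{eq2-2}) with a constant depending only on the quoted data.

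The main obstacle is the curvature computation and the accompanying choice of exponents in $d\hat\tau^{2}$: one must arrange that the $\partial\bar\partial$-contributions of the $q$ ramification factors and of $(1+|g|^{2})$ combine into a single positive multiple of $(\gamma\lambda-1)$ dominating the density of $d\hat\tau^{2}$, so that $K_{d\hat\tau^{2}}\leq -B$ really holds uniformly on the smooth locus rather than only on bounded subregions. The two constraints $\lambda\gamma>1$ and $\lambda(m+2)<1$ correspond, respectively, to positivity in the curvature step and to solvability in the final inversion, and their simultaneous feasibility is exactly the hypothesis $\gamma>m+2$; this is also why this method cannot reach the borderline $\gamma=m+2$.
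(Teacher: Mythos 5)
The survey itself contains no proof of Theorem~\ref{thm2-2}: it is quoted from \cite{Ka2015}, so your attempt has to be measured against the Fujimoto-type argument carried out there. Your overall strategy (an auxiliary negatively curved pseudometric, the Ahlfors--Schwarz lemma, and a parameter $\lambda$ whose admissible range $(1/\gamma,\,1/(m+2))$ encodes the hypothesis $\gamma>m+2$) is the right one, and your local order count at a ramification point is correct. But the step you yourself flag as ``the main obstacle'' is a genuine gap, and it cannot be closed by any choice of exponents. If $d\hat\tau^{2}$ is built only from powers of $|g'|$, $(1+|g|^{2})$, $|h|$ and $|g-\alpha_{j}|$, then off the singular set the only non-harmonic contribution to $\partial_{z}\partial_{\bar z}\log(\mathrm{density})$ is the one coming from $\log(1+|g|^{2})$, so $-K_{d\hat\tau^{2}}$ is a constant multiple of $|g'|^{2}(1+|g|^{2})^{-2}$ divided by the density. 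For this quotient to stay bounded below one would need the exponent of $|g'|$ in the density to be exactly $2$ (since $|g'|$ is uncontrolled at generic points) while simultaneously exceeding $2$ to compensate the negative powers of $|g-\alpha_{j}|$ near the ramification locus; these requirements are incompatible, and indeed $K_{d\hat\tau^{2}}\to 0$ along any sequence where $g$ approaches some $\alpha_{j}$. The proof in \cite{Ka2015}, following Fujimoto, resolves this by inserting the logarithmic factors $\log\bigl(a/[g,\alpha_{j}]^{2}\bigr)$, whose Laplacian supplies the dominant positive term $|g'|^{2}(1+|g|^{2})^{-2}[g,\alpha_{j}]^{-2}\bigl(\log(a/[g,\alpha_{j}]^{2})\bigr)^{-2}$ near $g=\alpha_{j}$, and by replacing $1-1/\nu_{j}$ with $1-1/\nu_{j}-\epsilon$ so that the logarithms can be absorbed; this $\epsilon$-room is exactly where the strict inequality $\gamma>m+2$ is spent.

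A second, related problem is your disk construction. Since $ds^{2}$ is not flat, there is no ``standard conformal chart'' producing, for every $R<d(p_{0})$, a holomorphic $\phi\colon D(R)\to\Sigma$ isometric for $ds^{2}$ at the center with radius comparable to $d(p_{0})$; if such disks were available a priori, the theorem would be nearly immediate. What is actually done is the reverse: one introduces a \emph{flat} auxiliary metric of the shape $d\tau^{2}=|h|^{2/(1-\lambda)}\bigl(\prod_{j}|g-\alpha_{j}|^{1-1/\nu_{j}-\epsilon}/|g'|\bigr)^{2\lambda/(1-\lambda)}|dz|^{2}$ on the complement of the zeros of $dg$ and the $\alpha_{j}$-points, takes the largest disk $D(R_{0})$ admitting a $d\tau^{2}$-developing map $\Phi$ with $\Phi(0)=p_{0}$, and then uses the main lemma $|g'|/\bigl((1+|g|^{2})\prod_{j}[g,\alpha_{j}]^{1-1/\nu_{j}-\epsilon}\bigr)\leq 2CR_{0}/(R_{0}^{2}-|w|^{2})$ twice: integrated along a suitable ray it yields $d(p_{0})\leq C'R_{0}^{1-\lambda}$, and evaluated at $w=0$ it yields $|K_{ds^{2}}(p_{0})|^{1/2}\leq C''R_{0}^{-(1-\lambda)}$. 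The radius $R_{0}$ is thus an unknown squeezed from both sides, not a quantity you may take close to $d(p_{0})$ at the outset. Your two constraints $\lambda\gamma>1$ and $\lambda(m+2)<1$ are the right bookkeeping, but without the logarithmic correction and the flat developing map neither of the two inequalities they are meant to certify is actually established.
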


This is a generalization of Theorem \ref{thm2-1}. Indeed, we can show it by setting ${\nu}_{1}=\cdots ={\nu}_{q=m+3}= \infty$. 
As an application of this theorem, we obtain an analogue of a special case of the Ahlfors islands theorem 
(See \cite{Be2000} for details of this theorem) for 
the meromorphic function $g$ on $\Sigma$ with the complete conformal metric $ds^{2}$. For more details, see \cite{Ka2015}. 

As a corollary of these theorems, we give the least upper bound for the number of exceptional values of the meromorphic 
function $g$ on $\Sigma$ with the complete conformal metric given by (\ref{eq2-1}). 

\begin{corollary}[\cite{Ka2013}]\label{thm2-3}
Let $\Sigma$ be an open Riemann surface with the conformal metric given by (\ref{eq2-1}). If the metric $ds^{2}$ 
is complete and the meromorphic function $g$ is nonconstant, then $g$ can omit at most $m+2$ distinct values. 
\end{corollary}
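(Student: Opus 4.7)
The plan is to argue by contradiction, using Theorem~\ref{thm2-1} to force flatness of $ds^{2}$ and then a direct curvature computation to contradict the nonconstancy of $g$. Suppose for contradiction that the nonconstant $g$ omits $q \geq m+3$ distinct values. Theorem~\ref{thm2-1} then supplies a positive constant $C$ with $|K_{ds^{2}}(p)|^{1/2} \leq C/d(p)$ for every $p \in \Sigma$. Because $ds^{2}$ is complete, every divergent curve in $\Sigma$ has infinite length, so $d(p) = +\infty$ at every $p$; feeding this into the bound gives $K_{ds^{2}} \equiv 0$ on $\Sigma$.

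Next I will compute $K_{ds^{2}}$ directly to show that a flat metric of the given form forces $g$ to be constant. In a local holomorphic chart $z$ on which $\omega = f\,dz$ with $f$ nowhere vanishing, the conformal factor of $ds^{2}$ is $(1+|g|^{2})^{m}|f|^{2}$. Using that $\log|f|^{2}$ is harmonic together with the identity $\Delta \log(1+|g|^{2}) = 4|g'|^{2}/(1+|g|^{2})^{2}$, a standard calculation yields
\begin{equation*}
K_{ds^{2}} = -\frac{2m\,|g'|^{2}}{(1+|g|^{2})^{m+2}\,|f|^{2}}.
\end{equation*}
Hence $K_{ds^{2}} \equiv 0$ forces $g' \equiv 0$ on the (open and dense) complement of the zero set of $\omega$, and by connectedness of $\Sigma$ the meromorphic function $g$ must be constant on all of $\Sigma$, which contradicts the hypothesis.

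The substantive content is entirely upstream in Theorem~\ref{thm2-1}; once that estimate is in hand, the corollary is a short deduction, and the only real design choice is how to rule out flatness. I will use the direct curvature formula rather than a lifting-to-the-universal-cover-plus-Picard argument, since the former avoids any appeal to uniformization. The one minor technical point worth flagging is the behavior of $ds^{2}$ at the discrete zero set of $\omega$, where the metric degenerates; this is harmless because the curvature identity above is valid on the dense complement and the resulting conclusion $g' \equiv 0$ extends to all of $\Sigma$ by the identity theorem for holomorphic functions.
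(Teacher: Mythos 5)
Your proposal is correct and follows essentially the same route as the paper: contradiction via Theorem~\ref{thm2-1}, completeness forcing $d(p)=+\infty$ and hence $K_{ds^{2}}\equiv 0$, and then the explicit curvature formula $K_{ds^{2}}=-2m|g'|^{2}/\bigl((1+|g|^{2})^{m+2}|f|^{2}\bigr)$ to conclude that $g$ is constant. Your added remark about the degeneracy of $ds^{2}$ on the zero set of $\omega$ is a minor tidying-up that the paper omits, but the argument is otherwise identical.
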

\begin{proof}
By way of contradiction, suppose that $g$ omits $m+3$ distinct values. By Theorem \ref{thm2-1}, (\ref{eq2-1}) holds. 
If $ds^{2}$ is complete, then we may set $d(p)= +\infty$ for all $p\in \Sigma$. Thus $K_{ds^{2}}\equiv 0$ on $\Sigma$. 
On the other hand, the Gaussian curvature with respect to the metric $ds^{2}$ is given by 
$$
K_{ds^{2}}=-\dfrac{2m|g_{z}'|^{2}}{(1+|g|^{2})^{m+2}|\hat{\omega}_{z}|^{2}},  
$$
where $\omega =\hat{\omega}_{z}dz$, $g'_{z}=dg/dz$. Thus $K_{ds^{2}}\equiv 0$ if and only if $g$ is constant. 
This contradicts the assumption that $g$ is nonconstant. 
\end{proof}

We give examples which ensure that Corollary \ref{thm2-3} is optimal. 
\begin{proposition}\label{thm2-4}
Let $\Sigma$ be either the complex plane punctured at $q-1$ distinct points ${\alpha}_{1}, \ldots, {\alpha}_{q-1}$ or 
the universal cover of that punctured plane. We set 
$$
\omega=\dfrac{dz}{{\prod}_{i=1}^{q-1}(z-{\alpha}_{i})}, \quad g=z. 
$$
Then $g$ omits $q$ distinct values and the metric $ds^{2}=(1+|g|^{2})^{m}|\omega|^{2}$ is complete if and only if 
$q\leq m+2$. In particular, there exist examples whose metric $ds^{2}$ is complete and $g$ omits $m+2$ distinct values. 
\end{proposition}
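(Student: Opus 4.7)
The plan is to verify the omission count by inspection and then compare the line element $ds$ with a model metric near each end of $\Sigma$, reducing completeness to a single one-variable integral test.

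First, $g=z$ misses exactly the points $\alpha_1,\ldots,\alpha_{q-1}$ (which are removed from $\Sigma$) together with $\infty$ (since $z$ is finite on $\Sigma$), giving the asserted $q$ omitted values. On the universal cover, $g$ is read as the composition with the covering projection, and the omitted set is unchanged.

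For completeness I would analyze the two types of ends separately. Near a finite puncture $\alpha_i$, the form $\omega$ has a simple pole while $(1+|g|^2)^m$ is bounded, so
$$
ds \;\sim\; \frac{c_i\,|dz|}{|z-\alpha_i|}\quad\text{with}\quad c_i=\frac{(1+|\alpha_i|^2)^{m/2}}{\prod_{j\neq i}|\alpha_i-\alpha_j|}>0,
$$
and any curve whose projection accumulates at $\alpha_i$ has infinite length by comparison with $\int_0^{\epsilon}dr/r$, independently of $q$. Near $z=\infty$, both factors are algebraic and a direct computation gives
$$
ds \;\asymp\; |z|^{m-(q-1)}\,|dz|\quad\text{as }|z|\to\infty,
$$
so a radial segment from $R$ to $\infty$ has length comparable to $\int_R^{\infty}r^{m-q+1}\,dr$, which is finite precisely when $m-q+1<-1$, i.e., when $q\geq m+3$.

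From here the proposition falls out. When $q\geq m+3$, a ray to infinity is a divergent curve of finite $ds$-length, so $ds^{2}$ is not complete. When $q\leq m+2$, an arbitrary divergent curve $\gamma$ in $\Sigma$ falls into one of three cases: (a) $|z|\to\infty$ along its projection, (b) the projection accumulates at some puncture $\alpha_i$, or (c) (only on the universal cover) the projection is precompact in the punctured plane, in which case the covering-space structure forces the projection to wind around some $\alpha_i$ infinitely often. In all three cases the two estimates above yield $\int_{\gamma}ds=\infty$.

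The main obstacle is the universal-cover sub-case (c): one must confirm that a divergent lift whose projection stays in a compact subset of the punctured plane must encircle some $\alpha_i$ infinitely often, and combine this with a uniform positive lower bound on the $ds^{2}$-length of such a loop (which follows from the simple-pole asymptotic near $\alpha_i$). Once that topological point is granted, the argument collapses to the single integral comparison $\int^{\infty}r^{m-q+1}\,dr$ dictating the critical threshold $q=m+2$.
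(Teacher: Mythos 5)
Your argument is correct and follows essentially the same route as the paper's proof: both reduce completeness to the integral test $\int^{\infty} r^{\,m-q+1}\,dr$ coming from the comparison $ds \asymp |z|^{m-q+1}|dz|$ near infinity, together with the simple-pole estimate $ds\asymp |z-\alpha_i|^{-1}|dz|$ at each puncture. You are in fact somewhat more careful than the paper, which simply asserts that a divergent curve must tend to a puncture or to $\infty$, whereas you explicitly isolate and resolve the universal-cover case of a divergent lift with precompact projection.
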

\begin{proof}
We can easily show that $g$ omits the $q$ distinct values ${\alpha}_{1}, \ldots, {\alpha}_{q-1}$ and $\infty$ on 
$\Sigma$. A divergent curve $\Gamma$ in $\Sigma$ must tend to one of the points 
${\alpha}_{1}, \ldots, {\alpha}_{q-1}$ or $\infty$.   
Thus we have 
$$
\int_{\Gamma}ds=\int_{\Gamma}(1+|g|^{2})^{m/2}|\omega|=\int_{\Gamma}\dfrac{(1+|z|^{2})^{m/2}}{\prod_{i=1}^{q-1}|z-{\alpha}_{i}|}|dz|= +\infty, 
$$
when $q\leq m+2$. 
\end{proof}

\begin{remark}\label{rmk2-1-1}
The geometric interpretation of the `$2$' in `$m+2$' is the Euler characteristic of the Riemann sphere. 
Indeed, if $m=0$ then the metric $ds^{2}=(1+|g|^{2})^{0}|\omega|^{2}=|\omega|^{2}$ is flat and complete on $\Sigma$. 
We thus may assume that $g$ is a meromorphic function on ${\C}$ because $g$ is replaced by $g\circ \pi$, 
where $\pi\colon \C\to \Sigma$ is a holomorphic universal covering map. 
On the other hand, Ahlfors \cite{Ah1935} and Chern \cite{Ch1960} showed that the best possible upper bound `$2$' 
of the number of exceptional values of nonconstant meromorphic functions on $\C$ coincides with the Euler 
characteristic of the Riemann sphere. Hence we get the conclusion. Remark that Ros \cite{Ro2002} gave a different 
approach of this fact by using `Bloch-Zalcman principle'. 
\end{remark}

We next provide another type of value-distribution-theoretic property of the meromorphic function $g$ on an open Riemann surface $\Sigma$ 
with the conformal metric given by (\ref{eq2-1}). Nevanlinna \cite{Ne1926} showed that two nonconstant meromorphic functions on $\C$ having the same images for 5 distinct values must identically equal to each other. We obtain the following analogue to this unicity theorem. 
\begin{theorem}[\cite{Ka2015}]\label{thm2-5}
Let $\Sigma$ be an open Riemann surface with the conformal metric 
\begin{equation}\label{eq2-4}
ds^{2}=(1+|g|^{2})^{m}|\omega|^{2}, 
\end{equation}
and $\widehat{\Sigma}$ another open Riemann surface with the conformal metric 
\begin{equation}\label{eq2-5}
d{\hat{s}}^{2}=(1+|\hat{g}|^{2})^{m}|\hat{\omega}|^{2}, 
\end{equation}
where $\omega$ and $\hat{\omega}$ are holomorphic $1$-forms, $g$ and $\hat{g}$ are nonconstant meromorphic functions 
on $\Sigma$ and $\widehat{\Sigma}$ respectively, and $m$ is a positive integer. 
We assume that there exists a conformal diffeomorphism $\Psi\colon \Sigma \to \widehat{\Sigma}$. Suppose that there exist $q$ distinct points 
${\alpha}_{1}, \ldots, {\alpha}_{q}\in \RC$ such that $g^{-1}({\alpha}_{j})=(\hat{g}\circ \Psi)^{-1}({\alpha}_{j})$ $(1\leq j\leq q)$. 
If $q \geq m+5 \,(=(m+4)+1)$ and either $ds^{2}$ or $d{\hat{s}}^{2}$ is complete, then $g\equiv \hat{g}\circ \Psi$. 
\end{theorem}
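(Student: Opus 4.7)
The plan is to adapt the Fujimoto--Nevanlinna unicity argument to the metric (\ref{eq2-4}), using Theorem \ref{thm2-2} (the ramification version of the curvature bound) as the analytic core. I begin by identifying $\widehat{\Sigma}$ with $\Sigma$ via $\Psi$, writing $h:=\hat g\circ\Psi$ and viewing $\hat\omega$ as a holomorphic $1$-form on $\Sigma$; without loss of generality, $ds^{2}$ is the complete metric. Suppose, toward contradiction, that $g\not\equiv h$, and set $\Sigma':=\{p\in\Sigma:g(p)\neq h(p)\}$, which is open, dense, and nonempty.

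The next step is to construct, on $\Sigma'$, an auxiliary conformal metric of the form
\[
d\sigma^{2}=(1+|g|^{2})^{m}|\Omega|^{2},
\]
where $\Omega$ is a meromorphic $1$-form on $\Sigma'$ obtained from $\omega$ by multiplication by a product of factors $(g-\alpha_j)^{a_j}(h-\alpha_j)^{b_j}$ (with the obvious substitution at $\alpha_j=\infty$) together with a negative power of $g-h$. The exponents are to be chosen to achieve two goals simultaneously. First, a zero of $\Omega$ of prescribed order is installed at each common $\alpha_j$-point, so that relative to $d\sigma^{2}$ the function $g$ has totally ramified $\alpha_j$-points with multiplicities $\nu_j\geq 2$ yielding
\[
\gamma=\sum_{j=1}^{q}\Bigl(1-\tfrac{1}{\nu_j}\Bigr)>m+2
\]
precisely when $q\geq m+5$. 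Second, a pole of $\Omega$ is placed along $\{g=h\}\setminus\bigcup_{j}g^{-1}(\alpha_j)$ whose order is controlled by the vanishing order of $g-h$ there.

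The third step is to verify completeness of $d\sigma^{2}$ on $\Sigma'$. Any divergent curve in $\Sigma'$ either diverges in $\Sigma$, in which case the auxiliary factor is bounded below along the curve and completeness of $ds^{2}$ transfers to $d\sigma^{2}$; or it accumulates on $\{g=h\}\setminus\bigcup_{j}g^{-1}(\alpha_j)$, in which case the pole of $\Omega$ just installed produces a blow-up of the integrand of sufficient order to make the arc length infinite. With these two cases handled, $d\sigma^{2}$ is complete on $\Sigma'$.

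Once the ramification count and completeness are established, Theorem \ref{thm2-2} applied to the data $(\Sigma',d\sigma^{2},g,\Omega)$ furnishes the curvature bound (\ref{eq2-2}), and completeness forces $K_{d\sigma^{2}}\equiv 0$ on $\Sigma'$. By the explicit formula for the curvature of a metric of type (\ref{eq2-1}) used in the proof of Corollary \ref{thm2-3}, this in turn forces $g$ to be constant on $\Sigma'$, and the density of $\Sigma'$ in $\Sigma$ yields a contradiction with the nonconstancy of $g$. The main obstacle is the bookkeeping of exponents in the auxiliary factor defining $\Omega$: they must simultaneously yield ramification weights summing strictly above $m+2$ and preserve the completeness transfer, and it is exactly this balancing of the ramification gain against the pole strength that pins down the threshold $q\geq m+5$.
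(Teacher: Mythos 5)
Your overall strategy --- ruling out $g\not\equiv\hat g\circ\Psi$ by building an auxiliary metric of type (\ref{eq2-1}) that is forced to be complete and flat --- is in the right family (the survey does not reproduce the proof, which is in \cite{Ka2015}, generalizing \cite{Fu1993}), but as written the argument has a gap I do not see how to close. The hypothesis of Theorem \ref{thm2-2}, that all $\alpha_j$-points of $g$ have multiplicity at least $\nu_j$, is a statement about the function $g$ alone: it is the vanishing order of $g-\alpha_j$, and it is completely unaffected by replacing $\omega$ with your $\Omega$. Installing zeros of $\Omega$ at the common $\alpha_j$-points does not make $g$ ramified there, and the sharing hypothesis $g^{-1}(\alpha_j)=(\hat g\circ\Psi)^{-1}(\alpha_j)$ by itself gives no ramification of $g$ whatsoever. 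So Theorem \ref{thm2-2} does not apply to the data $(\Sigma',d\sigma^{2},g,\Omega)$. The arithmetic confirms that something is off: with $\nu_j\geq 2$ the condition $\sum_{j}(1-1/\nu_j)>m+2$ forces $q\geq 2m+5$, not $q\geq m+5$, while with $\nu_j=\infty$ it gives $q\geq m+3$. The threshold $m+5=(m+2)+2+1$ is genuinely a two-function phenomenon --- an ``$m+2$'' coming from the function carried by the complete metric and a ``$2$'' from the other function, exactly as in Nevanlinna's $5=2+2+1$ --- and cannot be produced by a single-function ramification count.

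The proof in \cite{Fu1993} (for $m=2$) and \cite{Ka2015} (in general) instead exploits that, when $g\not\equiv h:=\hat g\circ\Psi$, the zero divisor of $g-h$ contains every common $\alpha_j$-point; the key analytic input is a Schwarz--Pick type pointwise estimate on disks for a quantity of the shape
$$
\dfrac{|g'_{z}|^{p}\,|h'_{z}|^{p'}}{|g-h|\;\prod_{j=1}^{q}\bigl(|g-{\alpha}_{j}|\,|h-{\alpha}_{j}|\bigr)^{1-\varepsilon}}\,(1+|g|^{2})^{\ast}(1+|h|^{2})^{\ast},
$$
in which the single factor $|g-h|^{-1}$ compensates the zeros of both products at the shared points; this is where both derivatives enter and where $q>m+4$ is used. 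One then defines a flat pseudometric from this quantity on the open set where it is finite and nonzero, passes to a universal cover, and produces a divergent curve of finite $ds^{2}$-length, contradicting completeness. Your completeness-transfer step has the same soft spot this machinery is designed to avoid: along a divergent curve in $\Sigma$ on which, say, $g\to\alpha_{1}$, your auxiliary factor in $\Omega$ is not bounded below, so completeness of $ds^{2}$ does not transfer to $d\sigma^{2}$. I would rework the argument along the two-function lines above rather than trying to route it through Theorem \ref{thm2-2}.
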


We remark that Theorem \ref{thm2-5} coincides with the Nevanlinna unicity theorem when $m=0$. 
The maps $g$ and $\hat{g}\circ \Psi$ are said to share the value $\alpha$ (ignoring multiplicity) when $g^{-1}(\alpha)= (\hat{g}\circ \Psi)^{-1}(\alpha)$. 
Theorem \ref{thm2-5} is optimal for an arbitrary even number $m\, (\geq 2)$ because there exist the following examples. 

\begin{proposition}[\cite{Fu1993, Ka2015}]\label{thm2-6}
For an arbitrary even number $m\, (\geq 2)$, we take $m/2$ distinct points ${\alpha}_{1}, \ldots, {\alpha}_{m/2}$ in $\C\backslash \{0, \pm 1 \}$. 
Let $\Sigma$ be either the complex plane punctured at $m+1$ distinct points $0$, ${\alpha}_{1}, \ldots, {\alpha}_{m/2}$, 
$1/{\alpha}_{1}, \ldots, 1/{\alpha}_{m/2}$ or the universal covering of that punctured plane. We set 
$$
\omega = \dfrac{dz}{z\prod_{i=1}^{m/2} (z-{\alpha}_{i})({\alpha}_{i}z-1)}, \quad g(z)=z\,,
$$
and 
$$
\hat{\omega}\, (=\omega) = \dfrac{dz}{z\prod_{i=1}^{m/2} (z-{\alpha}_{i})({\alpha}_{i}z-1)}, \quad \hat{g}(z)=\dfrac{1}{z}. 
$$
We can easily show that the identity map $\Psi\colon \Sigma \to \Sigma$ is a conformal diffeomorphism and 
the metrics $ds^{2}=(1+|g|^{2})^{m}|\omega|^{2}$ is complete. Then the maps $g$ and $\hat{g}$ share the $m+4$ distinct values 
$$ 0,\, \infty,\, 1,\, -1,\, {\alpha}_{1},\,\ldots, {\alpha}_{m/2},\, 1/{\alpha}_{1},\, \ldots,\, 1/{\alpha}_{m/2}$$ 
and $g\not\equiv \hat{g}\circ \Psi$. These show that the number `$m+5$' in Theorem \ref{thm2-5} cannot be replaced by `$m+4$'. 
\end{proposition}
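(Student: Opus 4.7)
The plan is to verify each of the four assertions directly from the explicit data. The conformality of $\Psi = \mathrm{id}_{\Sigma}$ is automatic, and $g\not\equiv \hat g\circ\Psi$ since the equation $z = 1/z$ has only two solutions $\pm 1$. The substantive content is thus (a) the shared-value property and (b) completeness of $ds^{2}$.

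For (a), the key observation I would invoke is that the removed set
$$S := \{0,\alpha_{1},\ldots,\alpha_{m/2},1/\alpha_{1},\ldots,1/\alpha_{m/2}\}$$
together with $\infty$ is stable under the inversion $z\mapsto 1/z$ on $\RC$. Since $g(z)=z$, one has $g^{-1}(\alpha) = \{\alpha\}\cap\Sigma$ for $\alpha\in\C$ and $g^{-1}(\infty)=\emptyset$; since $\hat g(z)=1/z$, one has $\hat g^{-1}(\alpha) = \{1/\alpha\}\cap\Sigma$ for $\alpha\ne 0,\infty$, while $\hat g^{-1}(0)=\emptyset$ and $\hat g^{-1}(\infty)=\{0\}\cap\Sigma=\emptyset$. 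The inversion stability of $S\cup\{\infty\}$ then immediately gives $g^{-1}(\alpha)=\hat g^{-1}(\alpha)$ for each of the $m+4$ listed values: both preimages equal $\{\pm 1\}$ at $\alpha=\pm 1$, both are empty at $0$ and $\infty$, and both are empty at each $\alpha_{i}$ and each $1/\alpha_{i}$ because these points lie in $S$.

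For (b), I would use the criterion that $ds^{2}$ is complete if and only if every divergent curve in $\Sigma$ has infinite length, and then estimate the line element $ds = (1+|z|^{2})^{m/2}|\omega|$ near each point of $S\cup\{\infty\}$, exactly as in the proof of Proposition \ref{thm2-4}. Near a finite puncture $p\in S$, the form $\omega$ has a simple pole and $(1+|g|^{2})^{m/2}$ is bounded, so $ds$ behaves like $C\,|dz|/|z-p|$, whose integral diverges. Near $\infty$, the denominator of $\omega$ has total polynomial degree $1+2\cdot(m/2) = m+1$, giving $|\omega|\sim |dz|/|z|^{m+1}$, while $(1+|z|^{2})^{m/2}\sim |z|^{m}$; hence $ds\sim |dz|/|z|$, which again has infinite length along any curve tending to $\infty$.

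No step presents a genuine obstacle, and indeed the entire content of the example is the precise matching, through the choice of $\omega$, of the number of removed punctures to the exponent $m$, so that the asymptotics at $\infty$ are critically divergent while the inversion $z\mapsto 1/z$ preserves $S\cup\{\infty\}$ and (because the factors $(z-\alpha_{i})(\alpha_{i}z-1)$ are manifestly inversion-symmetric up to scalars) allows the choice $\hat\omega=\omega$.
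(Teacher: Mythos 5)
Your verification is correct, and it follows exactly the route the paper intends: the paper itself gives no written proof of this proposition (it only asserts "we can easily show"), and your completeness estimate is the same divergent-curve argument used in the paper's proof of Proposition \ref{thm2-4}, with the shared-value check reducing, as you note, to the inversion-invariance of the puncture set together with $\infty$.
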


\section{Applications}\label{appli}
In this section, as applications of the main results, we give some value-distribution-theoretic properties for the Gauss maps of 
several classes of surfaces. 

\subsection{Gauss map of a complete minimal surface in ${\R}^{3}$}\label{appli-mini}
We first recall some basic facts of minimal surfaces in Euclidean 3-space ${\R}^{3}$. Details can be found, for example, 
in \cite{Fu1993-2}, \cite{La1982} and \cite{Os1986}. Let $X=(x^{1}, x^{2}, x^{3})\colon \Sigma \to {\R}^{3}$ be an oriented minimal surface in ${\R}^{3}$. 
By associating a local complex coordinate $z=u+\sqrt{-1}v$ with each positive isothermal coordinate system $(u, v)$, $\Sigma$ is considered as 
a Riemann surface whose conformal metric is the induced metric $ds^{2}$ from ${\R}^{3}$. Then 
\begin{equation}\label{equ-411-Laplacian}
{\triangle}_{ds^{2}}X=0
\end{equation}
holds, that is, each coordinate function $x^{i}$ is harmonic. With respect to the local complex coordinate $z=u+\sqrt{-1}v$ of the surface, 
(\ref{equ-411-Laplacian}) is given by 
$$
\bar{\partial}\partial X = 0, 
$$
where $\partial =(\partial /\partial u -\sqrt{-1}\partial /\partial v)/2$, $\bar{\partial} =(\partial /\partial u +\sqrt{-1}\partial /\partial v)/2$. 
Hence each ${\phi}_{i}:=\partial x^{i}dz$ $(i=1, 2, 3)$ is a holomorphic 1-form on $\Sigma$. If we set that 
\begin{equation}\label{eq:W-date3}
\omega ={\phi}_{1}-\sqrt{-1}{\phi}_{2}, \quad g=\dfrac{{\phi}_{3}}{{\phi}_{1}-\sqrt{-1}{\phi}_{2}}, 
\end{equation}
then $\omega$ is a holomorphic 1-form and $g$ is a meromorphic function on $\Sigma$. Moreover the function $g$ coincides with the composition of 
the Gauss map and the stereographic projection from ${\mathbf{S}}^{2}$ onto $\RC$, and the induced metric is given by 
\begin{equation}\label{equ-412-metric}
ds^{2}=(1+|g|^{2})^{2}|\omega|^{2}. 
\end{equation}

Applying Theorem \ref{thm2-1} to the metric $ds^{2}$, we can obtain the Fujimoto curvature bound 
for a minimal surface in ${\R}^{3}$. 

\begin{theorem}\cite[Theorem I and Corollary 3.4]{Fu1988}\label{thm3-1} 
Let $X\colon \Sigma \to {\R}^{3}$ be an oriented minimal surface whose Gauss map $g\colon \Sigma \to \RC$ omits greater than or 
equal to $5$ $(=2+3)$ distinct values. Then there exists a positive constant $C$ depending on the set of exceptional values, but not the surface, 
such that for all $p\in \Sigma$ inequality (\ref{eq2-2}) holds. In particular, the Gauss map of a nonflat complete minimal surface in ${\R}^{3}$ 
can omit at most $4$ $(=2+2)$ values. 
\end{theorem}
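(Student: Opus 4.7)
The plan is to recognize Theorem \ref{thm3-1} as a direct specialization of the general results in Section \ref{M-results} once the minimal surface setting is translated into Weierstrass form.

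First, I would invoke the Weierstrass representation already recalled in the subsection: given an oriented minimal surface $X\colon\Sigma\to\R^{3}$ with conformal Gauss data $(g,\omega)$ as in (\ref{eq:W-date3}), the induced metric is
\[
ds^{2}=(1+|g|^{2})^{2}|\omega|^{2},
\]
which is precisely the metric (\ref{eq2-1}) with the specific value $m=2$. In particular, $g$ is a meromorphic function on the Riemann surface $\Sigma$ and $\omega$ is a holomorphic $1$-form, so the setup of Theorem \ref{thm2-1} is verified.

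Second, for the curvature bound: under the hypothesis that $g$ omits at least $5$ distinct values, we have $q\geq 5=m+3$ with $m=2$, exactly the numerical threshold of Theorem \ref{thm2-1}. Applying that theorem yields a constant $C$, depending only on $m=2$ and on the exceptional set (hence not on the particular surface), such that
\[
|K_{ds^{2}}(p)|^{1/2}\leq \frac{C}{d(p)}
\]
for every $p\in\Sigma$. This is inequality (\ref{eq2-2}).

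Finally, for the ``in particular'' statement, I would apply Corollary \ref{thm2-3} with $m=2$: since a nonflat minimal surface has nonconstant Gauss map $g$ (by the curvature formula recalled in the proof of Corollary \ref{thm2-3}, $K_{ds^{2}}\equiv 0$ is equivalent to $g$ being constant, which geometrically means the surface is a plane), and the induced metric is complete by hypothesis, Corollary \ref{thm2-3} forces $g$ to omit at most $m+2=4$ distinct values. Alternatively, one can argue directly from (\ref{eq2-2}): completeness gives $d(p)=+\infty$, hence $K_{ds^{2}}\equiv 0$, hence $g$ is constant, contradicting nonflatness. Either route closes the argument.

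There is essentially no genuine obstacle here: the entire content of Theorem \ref{thm3-1} is packaged into the identification $m=2$ in the Weierstrass metric, so once Theorem \ref{thm2-1} and Corollary \ref{thm2-3} are granted, the proof is a one-line specialization. The only point worth stating carefully is the equivalence between ``nonflat'' and ``$g$ nonconstant'' for minimal surfaces in $\R^{3}$, which is immediate from the explicit curvature expression in the proof of Corollary \ref{thm2-3}.
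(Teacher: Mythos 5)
Your proposal is correct and follows exactly the route the paper intends: identify the induced metric $ds^{2}=(1+|g|^{2})^{2}|\omega|^{2}$ as the metric (\ref{eq2-1}) with $m=2$, then apply Theorem \ref{thm2-1} for the curvature bound and Corollary \ref{thm2-3} (or completeness plus the vanishing of $K_{ds^{2}}$) for the bound of $4$ exceptional values. The remark that ``nonflat'' is equivalent to ``$g$ nonconstant'' via the explicit curvature expression is the right way to close the argument and matches the paper's reasoning.
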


We note that this theorem is a generalization of the Bernstein theorem, stating that the only solution to the minimal surface equation over the whole plane is the trivial solution: a linear function (\cite{Be1915, Ca1970}). 

\begin{remark}\label{thm3-2-1}
For the Gauss maps of complete embedded minimal surfaces in ${\R}^{3}$, 
there exists an interesting conjecture called `Four Point Conjecture'. For more details, see \cite{MP2012}. 
\end{remark}

Moreover, by applying Theorem \ref{thm2-5}, we can get the Fujimoto unicity theorem for the Gauss maps of complete minimal surfaces in ${\R}^{3}$. 

\begin{theorem}[{\cite[Theorem I]{Fu1993-2}}]\label{thm3-2}
Let $X\colon \Sigma \to {\R}^{3}$ and $\widehat{X}\colon \widehat{\Sigma}\to {\R}^{3}$ be two nonflat minimal surfaces and 
assume that there exists a conformal diffeomorphism $\Psi\colon \Sigma \to \widehat{\Sigma}$. Let $g\colon \Sigma \to \RC$ and 
$\hat{g}\colon \widehat{\Sigma}\to \RC$ be the Gauss maps of $X(\Sigma)$ and $\widehat{X}(\widehat{\Sigma})$, respectively. 
If $g\not\equiv \hat{g}\circ \Psi$ and either $X(\Sigma)$ or $\widehat{X}(\widehat{\Sigma})$ is complete, 
then $g$ and $\hat{g}\circ \Psi$ share at most $6\,(=2+4)$ distinct values. 
\end{theorem}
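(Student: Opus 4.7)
The plan is to reduce the statement to a direct application of Theorem \ref{thm2-5} with $m=2$. The Weierstrass representation recalled in (\ref{eq:W-date3})--(\ref{equ-412-metric}) expresses the induced metric of any oriented minimal surface in $\R^3$ as $ds^2=(1+|g|^2)^2|\omega|^2$, where $\omega$ is a holomorphic $1$-form and $g\colon\Sigma\to\RC$ is the stereographic projection of the Gauss map. This is precisely the conformal metric (\ref{eq2-4}) with $m=2$, and the analogous identification on $\widehat{\Sigma}$ produces a metric of the form (\ref{eq2-5}), also with $m=2$.

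Next I would verify each hypothesis of Theorem \ref{thm2-5}. A conformal diffeomorphism $\Psi\colon\Sigma\to\widehat\Sigma$ is given by assumption. Nonflatness of the two minimal surfaces guarantees that $g$ and $\hat g$ are nonconstant: specializing the Gaussian curvature formula used in the proof of Corollary \ref{thm2-3} to $m=2$ shows that $K_{ds^2}\equiv 0$ if and only if $g$ is constant, and flatness of a minimal surface is equivalent to the vanishing of its Gaussian curvature. Completeness of $X(\Sigma)$ or $\widehat X(\widehat\Sigma)$ means completeness of the corresponding conformal metric $ds^2$ or $d\hat s^2$, which is exactly what Theorem \ref{thm2-5} requires.

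I would then argue by contraposition. Assume $g$ and $\hat g\circ\Psi$ share $q$ distinct values ${\alpha}_1,\ldots,{\alpha}_q\in\RC$, i.e.\ $g^{-1}({\alpha}_j)=(\hat g\circ\Psi)^{-1}({\alpha}_j)$ for $1\le j\le q$. Since $m+5=2+5=7$, if $q\ge 7$ then Theorem \ref{thm2-5} forces $g\equiv\hat g\circ\Psi$, contradicting the hypothesis $g\not\equiv\hat g\circ\Psi$. Consequently the number of shared values is at most $6=2+4$, proving the theorem.

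I do not anticipate any substantive obstacle: the argument is essentially a translation, since the whole analytic content of Theorem \ref{thm3-2} is already encoded in Theorem \ref{thm2-5}. The only point worth a line of care is the passage from \emph{nonflat complete minimal surface} to \emph{nonconstant meromorphic function with complete metric $ds^2=(1+|g|^2)^2|\omega|^2$}, but this is furnished by the curvature computation above.
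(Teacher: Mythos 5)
Your proposal is correct and is exactly the route the paper intends: Theorem \ref{thm3-2} is stated as a direct consequence of Theorem \ref{thm2-5} applied with $m=2$ to the induced metric (\ref{equ-412-metric}), with nonflatness supplying the nonconstancy of $g$ and $\hat g$. The verification of hypotheses and the contrapositive step with $m+5=7$ match the paper's (unwritten but clearly intended) argument.
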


\subsection{Lagrangian Gauss map of a weakly complete improper affine front in ${\R}^{3}$}\label{appli-improper}
Improper affine spheres in the affine $3$-space ${\R}^{3}$ also have similar properties to minimal surfaces in 
Euclidean $3$-space (for example, see \cite{Ca1970}). 
Recently, Mart\'inez \cite{Ma2005} discovered the correspondence between improper affine spheres and 
smooth special Lagrangian immersions in 
the complex $2$-space ${\C}^{2}$ and introduced the notion of {\it improper affine fronts}, that is, 
a class of (locally strongly convex) improper affine spheres with some admissible singularities in ${\R}^{3}$. 
We note that this class is called 
`improper affine maps' in \cite{Ma2005}, but we call this class `improper affine fronts' because all of improper affine maps are 
wave fronts in ${\R}^{3}$ (\cite{Na2009}, \cite{UY2011}). 
The differential geometry of wave fronts is discussed in \cite{SUY2009}. 
Moreover, Mart\'inez gave the following holomorphic representation for this class. 

\begin{theorem}[{\cite[Theorem 3]{Ma2005}}]
Let $\Sigma$ be a Riemann surface and $(F, G)$ a pair of holomorphic functions on $\Sigma$ such that $\text{Re}(FdG)$ is exact and 
$|dF|^{2}+|dG|^{2}$ is positive definite. Then the induced map $\psi\colon \Sigma \to {\R}^{3}=\C\times {\R}$ given by 
$$
\psi :=\biggl{(}G+\overline{F}, \dfrac{|G|^{2}-|F|^{2}}{2}+\text{Re}\biggl{(} GF- 2\int FdG \biggr{)} \biggr{)}
$$
is an improper affine front. Conversely, any improper affine front is given in this way. 
Moreover we set $x:= G+\overline{F}$ and $n:= \overline{F}-G$. Then $L_{\psi}:=x+\sqrt{-1}n\colon \Sigma \to {\C}^{2}$ is a special Lagrangian 
immersion whose induced metric $d{\tau}^{2}$ from ${\C}^{2}$ is given by 
$$
d{\tau}^{2}=2(|dF|^{2}+|dG|^{2}). 
$$
In addition, the affine metric $h$ of $\psi$ is expressed as $h:=|dG|^{2}-|dF|^{2}$ and the singular points of $\psi$ correspond to the 
points where $|dF|=|dG|$. 
\end{theorem}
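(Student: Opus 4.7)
The statement has three components: the forward construction (a pair $(F,G)$ satisfying the hypotheses yields an improper affine front $\psi$), the converse representation, and the auxiliary identities for the special Lagrangian lift $L_\psi$, the induced metric $d\tau^2$, the affine metric $h$, and the singular locus. My plan is to handle the forward direction and the auxiliary identities together by direct computation in a local holomorphic coordinate, and then to obtain the converse from the classical local graph description of improper affine spheres.

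For the forward direction I would first verify that $\psi$ is well-defined: the hypothesis that $\mathrm{Re}(F\,dG)$ is exact ensures that $\mathrm{Re}\int F\,dG$ descends to a single-valued real function on $\Sigma$, so the third coordinate of $\psi$ is globally defined. Next, choosing a local holomorphic coordinate $z$ with $F'=dF/dz$ and $G'=dG/dz$, I would compute $d\psi$ explicitly and observe that $d\psi$ drops rank precisely where $|F'|=|G'|$, while the nondegeneracy hypothesis $|dF|^2+|dG|^2>0$ excludes simultaneous vanishing of $F'$ and $G'$; hence $\psi$ is at worst a frontal and the singular set is correctly identified. To verify the improper affine sphere property on the regular locus I would compute the affine conormal from $\psi$ and check that the associated affine normal is a constant multiple of $(0,0,1)$, which is the defining property of improper affine spheres; along the way the formula $h=|dG|^{2}-|dF|^{2}$ for the Blaschke metric drops out. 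The wave-front upgrade from frontal to front is obtained by constructing a Legendrian lift using $n=\bar{F}-G$ as the contact datum, which is an immersion because $dF$ and $dG$ do not vanish simultaneously.

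For the special Lagrangian statement for $L_{\psi}=x+\sqrt{-1}\,n$, direct substitution of $dx=dG+d\bar{F}$ and $dn=d\bar{F}-dG$ into the pullbacks of the K\"ahler form and the imaginary part of the holomorphic volume form $dz_{1}\wedge dz_{2}$ on $\C^{2}$ reduces both to expressions that vanish by virtue of $F$ and $G$ being holomorphic; the induced metric computation $d\tau^{2}=|dx|^{2}+|dn|^{2}=2(|dF|^{2}+|dG|^{2})$ is then immediate.

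For the converse, given an improper affine front $\psi$, I would work on the regular locus in a local graph representation $x^{3}=u(x^{1},x^{2})$ subject to the Monge--Amp\`ere equation $\det\mathrm{Hess}(u)=1$. The Blaschke metric $h=u_{ij}\,dx^{i}\,dx^{j}$ is then positive definite and endows the regular locus with a canonical Riemann surface structure, and via the Lelieuvre-type integral formulas for the affine conormal I would reverse-engineer a holomorphic pair $(F,G)$ that reproduces $\psi$ locally. The main obstacle I anticipate is precisely at this final step: promoting the local holomorphic data to globally defined functions on the possibly singular $\Sigma$, and showing that the admissible wave-front singularities do not obstruct continuous extension of $(F,G)$ across them. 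The forward direction is mostly bookkeeping once the right coordinates are chosen; the converse is where the integrability of the Monge--Amp\`ere equation must be packaged into genuinely global complex-analytic data.
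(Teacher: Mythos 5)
This is a quoted theorem in a survey: the paper reproduces Mart\'inez's representation formula with only the citation to [Ma2005] and contains no proof of it, so there is nothing in-paper to compare your argument against; I can only assess the sketch on its own terms. The forward direction and the auxiliary identities are fine as you lay them out: exactness of $\mathrm{Re}(F\,dG)$ is exactly what makes the third coordinate single-valued; the cross terms in $|dx|^{2}+|dn|^{2}$ cancel to give $d\tau^{2}=2(|dF|^{2}+|dG|^{2})$, so $L_{\psi}$ is an immersion precisely under the positive-definiteness hypothesis; the Lagrangian and special conditions reduce to the holomorphicity of $F$ and $G$; and the rank of $d\psi$ drops exactly where $|dF|=|dG|$, which also yields $h=|dG|^{2}-|dF|^{2}$ on the regular locus.

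The genuine gap is in the converse. An improper affine \emph{front} is not an immersion, so on the singular set $\{|dF|=|dG|\}$ there is no local graph $x^{3}=u(x^{1},x^{2})$, no Monge--Amp\`ere equation, and no positive definite Blaschke metric with which to induce the conformal structure; your plan therefore only produces $(F,G)$ on the regular locus and then ``hopes'' to extend them across the singular curve, which is precisely the content that needs proving and the step you defer. The route that actually works (and is Mart\'inez's) starts from the definition of an improper affine front via the lift $L_{\psi}=x+\sqrt{-1}\,n$, which is an immersion everywhere, including at the singular points of $\psi$. A special Lagrangian surface in ${\C}^{2}$ is a complex curve for a rotated orthogonal complex structure on ${\R}^{4}$, and under that identification $G=(x-n)/2$ and $F=\overline{(x+n)}/2$ are globally defined from the outset --- no integration of local Lelieuvre data and no gluing are needed --- and holomorphic; exactness of $\mathrm{Re}(F\,dG)$ then follows from the single-valuedness of the third coordinate of $\psi$. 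If you insist on the graph/Monge--Amp\`ere route, you must add a separate argument (Morera/reflection across the singular curve, or a removable-singularity statement) showing that the locally constructed $F$ and $G$ extend holomorphically over the singular set, and you must also say what ``improper affine front'' means there, since the graph description you use presupposes regularity.
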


We remark that Nakajo \cite{Na2009} constructed a representation formula for indefinite improper affine spheres with 
some admissible singularities. The nontrivial part of the Gauss map of $L_{\psi}\colon \Sigma \to {\C}^{2}\simeq {\R}^{4}$ (see \cite{CM1987}) is the meromorphic function 
$\nu\colon \Sigma \to \RC$ given by 
$$
\nu := \dfrac{dF}{dG}, 
$$
which is called the {\it Lagrangian Gauss map} of $\psi$. An improper affine front is said to be {\it weakly complete} if the induced metric 
$d{\tau}^{2}$ is complete. We note that 
$$
d{\tau}^{2}=2(|dF|^{2}+|dG|^{2})=2(1+|\nu|^{2})|dG|^{2}. 
$$

Applying Theorem \ref{thm2-1} to the metric $d{\tau}^{2}$, we can get the following theorem. 

\begin{theorem}[{\cite[Theorem 4.6]{Ka2013}}]\label{thm3-3}
Let $\psi\colon \Sigma \to {\R}^{3}$ be an improper affine front whose Lagrangian Gauss map $\nu\colon \Sigma \to \RC$ 
omits greater than or equal to $4$ $(=2+2)$ distinct values. 
Then there exists a positive constant $C$ depending on the set of exceptional values, but not $\Sigma$, such that for all $p\in \Sigma$ we have 
$$
|K_{d{\tau}^{2}}(p)|^{1/2}\leq \dfrac{C}{d(p)}, 
$$
where $K_{d{\tau}^{2}}(p)$ is the Gaussian curvature of the metric $d{\tau}^{2}$ at $p$ and $d(p)$ is the geodesic distance from $p$ to  
the boundary of $\Sigma$. In particular, if the Lagrangian Gauss map of a weakly complete improper affine front in ${\R}^{3}$ is nonconstant, 
then it can omit at most $3$ $(=1+2)$ values. 
\end{theorem}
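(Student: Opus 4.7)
The plan is to reduce the statement directly to the general results of Section \ref{M-results} by identifying the weakly complete metric $d\tau^{2}$ as a conformal metric of the form (\ref{eq2-1}) with exponent $m=1$. From the representation formula we have
$$
d\tau^{2}=2(|dF|^{2}+|dG|^{2})=2(1+|\nu|^{2})|dG|^{2},
$$
and, since $G$ is holomorphic, $\omega:=\sqrt{2}\,dG$ is a holomorphic $1$-form on $\Sigma$. With this choice we obtain
$$
d\tau^{2}=(1+|\nu|^{2})^{m}|\omega|^{2}\quad\text{with}\quad m=1,\ g=\nu,
$$
which is exactly the setting of Theorem \ref{thm2-1}.

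For the first part of the statement, I would note that the hypothesis ``$\nu$ omits at least $4$ distinct values'' is precisely the condition $q\geq m+3=4$ appearing in Theorem \ref{thm2-1}. Applying that theorem to $(\Sigma,d\tau^{2})$ immediately yields a positive constant $C$, depending only on $m=1$ and on the set of exceptional values of $\nu$ (and not on the surface $\Sigma$), such that
$$
|K_{d\tau^{2}}(p)|^{1/2}\leq \dfrac{C}{d(p)}\qquad\text{for all }p\in \Sigma.
$$

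For the ``in particular'' statement, I would invoke Corollary \ref{thm2-3} with the same identification $m=1$. Weak completeness of $\psi$ means precisely that $d\tau^{2}$ is a complete conformal metric on $\Sigma$, so if $\nu$ is nonconstant, Corollary \ref{thm2-3} forces $\nu$ to omit at most $m+2=3$ distinct values in $\overline{\mathbf{C}}$.

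There is essentially no obstacle beyond matching notations: the only conceptual point is to recognize that Mart\'inez's representation gives exactly an $(1+|g|^{2})^{m}|\omega|^{2}$-type metric with $m=1$, so that the work already done in Theorem \ref{thm2-1} and Corollary \ref{thm2-3} applies verbatim. Nothing about improper affine geometry is needed beyond this holomorphic representation, which is the whole point of Mart\'inez's theorem and the reason the Lagrangian Gauss map admits such clean value-distribution results.
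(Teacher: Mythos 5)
Your proposal is correct and is essentially identical to the paper's argument: the paper obtains Theorem \ref{thm3-3} precisely by writing $d\tau^{2}=2(1+|\nu|^{2})|dG|^{2}$ as a metric of the form (\ref{eq2-1}) with $m=1$, $g=\nu$ and $\omega$ a constant multiple of $dG$, and then applying Theorem \ref{thm2-1} for the curvature bound and Corollary \ref{thm2-3} for the bound of $3$ exceptional values in the weakly complete case. No further comment is needed.
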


Since the singular points of $\psi$ correspond to the points where $|\nu|=1$, we can obtain a simple proof of the parametric affine Bernstein 
theorem (\cite{Ca1958}, \cite{Jo1954}) for improper affine spheres from the viewpoint of value-distribution-theoretic properties of the Lagrangian Gauss map. 
\begin{corollary}[{\cite{Ca1958, Jo1954}}]\label{them3-4}
Any affine complete improper affine sphere in ${\R}^{3}$ must be an elliptic paraboloid. 
\end{corollary}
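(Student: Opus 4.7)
The plan is to apply Theorem~\ref{thm3-3} directly to the Lagrangian Gauss map of $\psi$, and then recover the elliptic paraboloid from the resulting rigid case. First I would invoke the fact, stated in the paragraph preceding the corollary, that an improper affine sphere is precisely an improper affine front with no singular points, together with the characterization of the singular set as $\{p\in\Sigma:|\nu(p)|=1\}$. By continuity and the connectedness of $\Sigma$, the meromorphic map $\nu$ must therefore take values either in the open unit disk $\{|\nu|<1\}$ or in its exterior $\{|\nu|>1\}\cup\{\infty\}$; in either case it omits every value of modulus one, hence certainly more than $3$ distinct values.

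Next I would pass from affine completeness to weak completeness in order to unlock Theorem~\ref{thm3-3}. After interchanging the roles of $F$ and $G$ if necessary, I may assume $|\nu|<1$ on $\Sigma$. The affine metric then reads $h=(1-|\nu|^{2})|dG|^{2}$, while the induced metric of the special Lagrangian immersion $L_{\psi}$ satisfies
$$
d{\tau}^{2}=2(1+|\nu|^{2})|dG|^{2}=2h+4|dF|^{2}\geq 2h.
$$
Consequently any divergent curve of infinite $h$-length also has infinite $d\tau^{2}$-length, so completeness of $h$ forces weak completeness of $\psi$. Theorem~\ref{thm3-3} therefore applies, and combined with the previous step it forces $\nu$ to be constant, say $\nu\equiv a\in\C$ with $|a|<1$.

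It remains to check that constancy of $\nu=dF/dG$ forces $\psi(\Sigma)$ to be an elliptic paraboloid. Integrating $dF=a\,dG$ yields $F=aG+b$ for some constant $b\in\C$, and then $2\int FdG=aG^{2}+2bG$. Substituting into the Mart\'inez representation and writing $G=u+\sqrt{-1}v$, one checks that the first two real coordinates of $\psi$ depend on $(u,v)$ through an invertible real-affine map, with invertibility of the associated $2\times 2$ matrix equivalent to $1-|a|^{2}\neq 0$. A direct computation of the third coordinate then gives a positive-definite quadratic polynomial in $(u,v)$ with leading part $\frac{1-|a|^{2}}{2}(u^{2}+v^{2})$. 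Absorbing the linear and constant terms by an affine change of coordinates in ${\R}^{3}$ exhibits $\psi(\Sigma)$ as an elliptic paraboloid.

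The only genuine obstacle I expect is the bookkeeping required to promote affine completeness to weak completeness under the correct orientation of the pair $(F,G)$; once Theorem~\ref{thm3-3} has been invoked, the identification of the rigid case with an elliptic paraboloid is a direct, if slightly tedious, calculation.
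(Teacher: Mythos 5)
Your proof is correct and follows essentially the same route as the paper: the Lagrangian Gauss map omits the unit circle, the inequality $h\leq d\tau^{2}$ upgrades affine completeness to weak completeness, and Theorem~\ref{thm3-3} forces $\nu$ to be constant. The only difference is that you verify the final step (constant $\nu$ implies elliptic paraboloid) by direct computation from the Mart\'inez representation, whereas the paper cites \cite[Proposition 3.1]{KN2012}; your computation is a valid, self-contained substitute.
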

\begin{proof}
Since an improper affine sphere has no singularities, the complement of the image of its Lagrangian Gauss map $\nu$ contains at least 
the circle $\{|\nu| =1\}\subset \RC$. Thus, by exchanging roles of $dF$ and $dG$ if necessarily, $|\nu|< 1$ holds, that is, 
$|dF|<|dG|$. 
On the other hand, we have 
$$
h= |dG|^{2}-|dF|^{2}< 2(|dF|^{2}+|dG|^{2}) = d{\tau}^{2}. 
$$
Thus if an improper affine sphere is affine complete, then it is also weakly complete. From Theorem \ref{thm3-3} and \cite[Proposition 3.1]{KN2012}, 
 it is an elliptic paraboloid. 
\end{proof}

By applying Theorem \ref{thm2-5}, we give the following unicity theorem for the Lagrangian Gauss maps of weakly complete improper affine fronts in 
${\R}^{3}$. 

\begin{theorem}[{\cite[Theorem 4.24]{Ka2015}}]\label{thm3-5}
Let $\psi\colon \Sigma \to {\R}^{3}$ and $\widehat{\psi}\colon \widehat{\Sigma}\to {\R}^{3}$ be two improper affine fronts and 
assume that there exists a conformal diffeomorphism $\Psi\colon \Sigma \to \widehat{\Sigma}$. Let $\nu\colon \Sigma \to \RC$ and 
$\hat{\nu}\colon \widehat{\Sigma}\to \RC$ be the Lagrangian Gauss maps of $\psi(\Sigma)$ and $\widehat{\psi}(\widehat{\Sigma})$ respectively. 
Suppose that there exist $q$ distinct points ${\alpha}_{1}, \ldots, {\alpha}_{q}\in \RC$ 
such that ${\nu}^{-1}({\alpha}_{j})=(\hat{\nu}\circ \Psi)^{-1}({\alpha}_{j})$ $(1\leq j\leq q)$. 
If $q \geq 6 \,(=(1+4)+1)$ and either $\psi (\Sigma)$ or $\widehat{\psi} (\widehat{\Sigma})$ is weakly complete, then either 
$\nu\equiv \hat{\nu}\circ \Psi$ or $\nu$ and $\hat{\nu}$ are both constant, that is, $\psi (\Sigma)$ and  $\widehat{\psi} (\widehat{\Sigma})$ 
are both elliptic paraboloids. 
\end{theorem}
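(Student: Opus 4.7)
The plan is to reduce Theorem \ref{thm3-5} directly to Theorem \ref{thm2-5} via the Mart\'inez representation. The crucial observation is that the induced metric $d\tau^{2} = 2(1+|\nu|^{2})|dG|^{2}$ of an improper affine front is exactly of the form (\ref{eq2-4}) with $m=1$, $g=\nu$, and $\omega = \sqrt{2}\,dG$; the same identifications apply on $\widehat{\Sigma}$ with $\hat{g}=\hat{\nu}$ and $\hat{\omega}=\sqrt{2}\,d\widehat{G}$. Under this dictionary, weak completeness of an improper affine front is precisely completeness of the corresponding conformal metric, and the present threshold $q\geq 6$ matches exactly the threshold $q \geq m+5$ in Theorem \ref{thm2-5}.

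First I would dispose of the nondegenerate case. If both $\nu$ and $\hat{\nu}$ are nonconstant meromorphic functions, then Theorem \ref{thm2-5} applies verbatim to the pair $(d\tau^{2}, d\hat{\tau}^{2})$ together with the conformal diffeomorphism $\Psi$, and immediately yields $\nu \equiv \hat{\nu}\circ\Psi$.

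Next I would treat the remaining case in which one Lagrangian Gauss map is constant; say $\nu \equiv c$ (the opposite case is symmetric). By Proposition 3.1 of \cite{KN2012} --- the same ingredient used in the proof of Corollary \ref{them3-4} --- the surface $\psi(\Sigma)$ is then an elliptic paraboloid. It remains to propagate this constancy to $\hat{\nu}$. If some $\alpha_{j}=c$, the shared-value identity $\nu^{-1}(\alpha_{j}) = (\hat{\nu}\circ\Psi)^{-1}(\alpha_{j}) = \Sigma$ immediately forces $\hat{\nu}\equiv c$. Otherwise $\hat{\nu}$ omits all $q\geq 6$ values $\alpha_{1},\ldots,\alpha_{q}$; if $\hat{\psi}$ is the surface assumed weakly complete, Theorem \ref{thm3-3} forces $\hat{\nu}$ constant since $6>3$, while if $\psi$ is the weakly complete one, completeness of the flat metric $d\tau^{2}$ (flat because $\nu$ is constant) forces $\Sigma \cong \C$, whence $\widehat{\Sigma}\cong\C$ via $\Psi$ and Picard's little theorem forces $\hat{\nu}$ constant. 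An entirely analogous argument rules out the mixed case ``$\nu$ nonconstant, $\hat{\nu}$ constant'', ensuring the dichotomy in the conclusion.

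The substantive analytic content is fully absorbed into Theorem \ref{thm2-5}, so the work at this stage is essentially structural. The main --- and modest --- obstacle is the constant-Gauss-map case, where Theorem \ref{thm2-5} does not apply directly; one must instead combine the shared-value identities with Theorem \ref{thm3-3} and a short uniformization argument, keeping track of which of $\psi, \hat{\psi}$ carries the weak-completeness hypothesis.
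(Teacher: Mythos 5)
Your proposal is correct and follows essentially the route the paper intends: the survey obtains Theorem \ref{thm3-5} precisely by applying Theorem \ref{thm2-5} with $m=1$, $g=\nu$, $\omega=\sqrt{2}\,dG$ to $d\tau^{2}=2(1+|\nu|^{2})|dG|^{2}$, and your separate treatment of the constant case (via the shared preimage sets, Theorem \ref{thm3-3}, and Picard) correctly supplies the dichotomy in the conclusion, which Theorem \ref{thm2-5} alone does not cover. The only nitpick is that completeness of the flat metric forces the \emph{universal cover} of $\Sigma$ to be $\C$ rather than $\Sigma$ itself (e.g.\ $\Sigma$ could be $\C\setminus\{0\}$), but lifting $\hat{\nu}\circ\Psi$ to that cover and applying Picard there repairs this in one line.
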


\subsection{Ratio of canonical forms of a weakly complete flat front in ${\H}^{3}$}\label{appli-flat}

For a holomorphic Legendrian immersion $\Lc\colon \Sigma \to SL(2, \C)$ on a simply connected Riemann surface $\Sigma$, the projection 
$$
f:= \Lc{\Lc}^{\ast}\colon \Sigma \to {\H}^{3}
$$
gives a {\it flat front} in ${\H}^{3}$. Here, flat fronts in ${\H}^{3}$ are flat surfaces in ${\H}^{3}$ with some admissible singularities  
(see \cite{KRUY2007}, \cite{KUY2004} for the definition of flat fronts in ${\H}^{3}$). We call $\Lc$ the {\it holomorphic lift} of $f$. 
Since $\Lc$ is a holomorphic Legendrian map, ${\Lc}^{-1}{d\Lc}$ is off-diagonal (see \cite{GMM2000}, \cite{KUY2003}, \cite{KUY2004}). 
If we set 
$$
{\Lc}^{-1}{d\Lc} = \left(
\begin{array}{cc}
0      & \theta  \\
\omega & 0
\end{array}
\right), 
$$
then the pull-back of the canonical Hermitian metric of $SL(2, \C)$ by $\Lc$ is represented as 
$$
ds^{2}_{\Lc}:=|\omega|^{2}+|\theta|^{2}
$$ 
for holomorphic $1$-forms $\omega$ and $\theta$ on $\Sigma$. A flat front $f$ is said to be {\it weakly complete} 
if the metric $ds^{2}_{\Lc}$ is complete (\cite{KRUY2009, UY2011}). We define a meromorphic function on $\Sigma$ by the ratio of canonical forms 
$$
\rho := \dfrac{\theta}{\omega}. 
$$
Then a point $p\in \Sigma$ is a singular point of $f$ if and only if $|\rho (p)|=1$ (\cite{KRSUY2005}). We note that 
$$
ds^{2}_{\Lc}=|\omega|^{2}+|\theta|^{2}=(1+|\rho|^{2})|\omega|^{2}. 
$$

Applying Theorem \ref{thm2-1} to the metric $ds^{2}_{\Lc}$, we can get the following theorem.

\begin{theorem}[{\cite[Theorem 4.8]{Ka2013}}]\label{thm3-6}
Let $f\colon \Sigma \to {\H}^{3}$ be a flat front on a simply connected  Riemann surface $\Sigma$. 
Suppose that the ratio of canonical forms $\rho\colon \Sigma \to \RC$ omits greater than or equal to $4$ $(=2+2)$ distinct values. 
Then there exists a positive constant $C$ depending on the set of exceptional values, but not $\Sigma$, such that for all $p\in \Sigma$ we have 
$$
|K_{ds_{\Lc}^{2}}(p)|^{1/2}\leq \dfrac{C}{d(p)}, 
$$
where $K_{ds^{2}_{\Lc}}(p)$ is the Gaussian curvature of the metric $ds^{2}_{\Lc}$ at $p$ and $d(p)$ is the geodesic distance from $p$ to  
the boundary of $\Sigma$. In particular, if the ratio of canonical forms of a weakly complete flat front in ${\H}^{3}$ is nonconstant, 
then it can omit at most $3$ $(=1+2)$ values. 
\end{theorem}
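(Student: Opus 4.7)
The plan is to apply Theorem \ref{thm2-1} directly to the metric $ds_{\Lc}^{2}$. The key observation is that, since $\omega$ and $\theta$ are holomorphic $1$-forms on $\Sigma$ and $\rho = \theta/\omega$ is a meromorphic function, the identity
$$
ds_{\Lc}^{2} = |\omega|^{2} + |\theta|^{2} = (1+|\rho|^{2})|\omega|^{2}
$$
exhibits $ds_{\Lc}^{2}$ as a conformal metric of the form (\ref{eq2-1}) with $m=1$ and $g=\rho$. A mild subtlety is that $\omega$ may vanish, but at a zero of $\omega$ the form $\theta$ is nonzero and one can locally interchange the roles of $\omega$ and $\theta$; this swap replaces $\rho$ by $1/\rho$, preserves $ds_{\Lc}^{2}$, and sends the $q$ omitted values of $\rho$ to $q$ omitted values of $1/\rho$, so the hypothesis $q\geq 4$ is unaffected.

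With the metric in this form, the assumption that $\rho$ omits $q\geq 4 = 1+3 = m+3$ distinct values is precisely the hypothesis of Theorem \ref{thm2-1} (applied with $m=1$). Invoking that theorem immediately produces a positive constant $C$, depending only on $m=1$ and on the set of exceptional values of $\rho$ but not on $\Sigma$, such that
$$
|K_{ds_{\Lc}^{2}}(p)|^{1/2} \leq \dfrac{C}{d(p)}
$$
for every $p\in \Sigma$. This is exactly the stated curvature estimate.

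For the `in particular' assertion, I would argue by contradiction, exactly in the style of the proof of Corollary \ref{thm2-3}. Suppose $f$ is weakly complete and $\rho$ is a nonconstant meromorphic function on $\Sigma$ that omits at least $4$ distinct values. Weak completeness says $ds_{\Lc}^{2}$ is complete, so we may set $d(p)=+\infty$ for every $p\in \Sigma$; the curvature bound then forces $K_{ds_{\Lc}^{2}}\equiv 0$ on $\Sigma$. On the other hand, writing $\omega = \hat{\omega}_{z}\,dz$ in a local coordinate and specializing the curvature formula recalled in the proof of Corollary \ref{thm2-3} to $m=1$, one obtains
$$
K_{ds_{\Lc}^{2}} = -\dfrac{2|\rho'_{z}|^{2}}{(1+|\rho|^{2})^{3}|\hat{\omega}_{z}|^{2}},
$$
so $K_{ds_{\Lc}^{2}}\equiv 0$ forces $\rho'_{z}\equiv 0$, contradicting non-constancy of $\rho$.

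The proof involves essentially no obstacle beyond the initial observation: once $ds_{\Lc}^{2}$ is recognized as a metric of the form (\ref{eq2-1}) with $m=1$, the theorem and its consequence are immediate specializations of Theorem \ref{thm2-1} and (the proof of) Corollary \ref{thm2-3}. The only bookkeeping point is the global choice of $\omega$ versus $\theta$ on the simply connected $\Sigma$, and this is handled by the symmetry $\rho \leftrightarrow 1/\rho$ noted above; this is exactly what makes the flat front case a clean application of the unified framework of Section \ref{M-results}.
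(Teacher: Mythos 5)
Your proposal is correct and follows exactly the route the paper intends: writing $ds^{2}_{\Lc}=|\omega|^{2}+|\theta|^{2}=(1+|\rho|^{2})|\omega|^{2}$ identifies the lift metric as an instance of (\ref{eq2-1}) with $m=1$ and $g=\rho$, so the curvature estimate is Theorem \ref{thm2-1} with $q\geq m+3=4$, and the ``in particular'' clause is Corollary \ref{thm2-3} (via the explicit curvature formula specialized to $m=1$). Your side remark about zeros of $\omega$ is harmless but not needed, since $(1+|\rho|^{2})|\omega|^{2}$ is already globally of the required form and stays positive there because $\theta\neq 0$ at such points.
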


If $\Sigma$ is not simply connected, then we consider that $\rho$ is a meromorphic function on its universal 
covering surface $\widetilde{\Sigma}$. As an application of Theorem \ref{thm3-6}, 
we can obtain a simple proof of the classification of 
complete nonsingular flat surfaces in ${\H}^{3}$. For the proof, see \cite[Corollary 3.5]{Ka2014}. 

\begin{corollary}[\cite{Sa1973, VV1971}]\label{thm3-7}
Any complete nonsingular flat surface in ${\H}^{3}$ must be a horosphere or a hyperbolic cylinder. 
\end{corollary}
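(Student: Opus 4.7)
The plan is to apply Theorem \ref{thm3-6} to force the ratio of canonical forms $\rho$ to be constant, and then to identify constant-$\rho$ flat surfaces with the two named families.

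First I would observe that completeness in the induced metric $ds^{2}$ automatically implies weak completeness. Indeed, the first fundamental form of a flat front in $\H^{3}$ satisfies $ds^{2}=|\omega|^{2}+|\theta|^{2}+2\,\mathrm{Re}(\omega\theta)\leq 2(|\omega|^{2}+|\theta|^{2})=2\,ds^{2}_{\Lc}$ by the elementary bound $2\,\mathrm{Re}(\omega\theta)\leq 2|\omega||\theta|\leq|\omega|^{2}+|\theta|^{2}$, so every divergent curve of infinite $ds$-length has infinite $ds_{\Lc}$-length. If $\Sigma$ is not simply connected, I would pass to the universal cover, on which completeness and nonsingularity are both preserved and $\rho$ becomes a single-valued meromorphic function.

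Next, nonsingularity rules out every point where $|\rho|=1$, so by continuity and connectedness the image of $\rho$ lies either entirely in the open unit disk $\{|\zeta|<1\}$ or entirely in its exterior $\{|\zeta|>1\}\subset \RC$. In either case $\rho$ omits uncountably many distinct values. But by Theorem \ref{thm3-6}, a nonconstant ratio of canonical forms of a weakly complete flat front in $\H^{3}$ can omit at most $3$ distinct values. Hence $\rho$ must be constant.

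Finally, I would classify flat surfaces with $\rho\equiv c$ constant. Writing $\theta=c\omega$ and integrating the Legendrian system ${\Lc}^{-1}d\Lc$ in closed form yields the following: the degenerate case $c=0$ (equivalently $c=\infty$, after swapping the roles of $\omega$ and $\theta$) produces a horosphere, and any finite nonzero $c$ with $|c|\neq 1$ produces a hyperbolic cylinder. The main obstacle I would expect is precisely this final classification step, which is a routine but nontrivial explicit integration; I would quote it from \cite{Ka2014}. The genuinely new input of the argument is the omitted-values reduction, which is immediate from Theorem \ref{thm3-6} once nonsingularity is translated into the exclusion of the unit circle.
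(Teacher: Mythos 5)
Your proposal is correct and follows exactly the route the paper intends: the paper derives this corollary as an application of Theorem \ref{thm3-6} (deferring details to \cite[Corollary 3.5]{Ka2014}), namely completeness forces weak completeness, nonsingularity forces $\rho$ to omit the whole unit circle and hence be constant, and the constant-$\rho$ surfaces are the horospheres and hyperbolic cylinders. This mirrors the argument the paper writes out for Corollary \ref{them3-4} in the improper affine sphere setting, so there is nothing to add.
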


Finally, by applying Theorem \ref{thm2-5}, we provide the following unicity theorem for the ratios of canonical forms 
of weakly complete flat fronts in ${\H}^{3}$. 

\begin{theorem}[{\cite[Theorem 4.29]{Ka2015}}]\label{thm3-8}
Let $f\colon \Sigma \to {\H}^{3}$ and $\widehat{f}\colon \widehat{\Sigma}\to {\R}^{3}$ be two flat fronts on simply 
connected Riemann surfaces and assume that there exists a conformal diffeomorphism 
$\Psi\colon \Sigma \to \widehat{\Sigma}$. Let $\rho\colon \Sigma \to \C\cup\{\infty \}$ and 
$\hat{\rho}\colon \widehat{\Sigma}\to \C\cup\{\infty \}$ be the ratio of canonical forms $f(\Sigma)$ and 
$\widehat{f}(\widehat{\Sigma})$ respectively. 
If $\rho \not\equiv \hat{\rho}\circ \Psi$ and either $f(\Sigma)$ or $f(\widetilde{\Sigma})$ is weakly complete, 
then $\rho$ and $\hat{\rho}\circ \Psi$ share at most $5=(1+4)$ distinct values. 
\end{theorem}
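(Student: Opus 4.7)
The plan is to deduce this statement directly from the unicity Theorem \ref{thm2-5} by matching it against the geometric setup of flat fronts in ${\H}^{3}$ recalled in Section \ref{appli-flat}. The key observation is that the pull-back metric associated to the holomorphic Legendrian lift $\Lc$ of a flat front decomposes as
\begin{equation*}
ds^{2}_{\Lc}=|\omega|^{2}+|\theta|^{2}=(1+|\rho|^{2})|\omega|^{2},
\end{equation*}
which is precisely of the form (\ref{eq2-4}) with $m=1$, where the role of the meromorphic function is played by the ratio of canonical forms $\rho=\theta/\omega$. Since $\Sigma$ and $\widehat{\Sigma}$ are assumed simply connected, the holomorphic lifts $\Lc$ and $\widehat{\Lc}$ exist globally, so the $1$-forms $\omega,\theta$ (resp.\ $\hat{\omega},\hat{\theta}$) and the meromorphic function $\rho$ (resp.\ $\hat{\rho}$) are globally defined on $\Sigma$ (resp.\ $\widehat{\Sigma}$).

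With this identification, the proof proceeds by contradiction. First I would suppose that $\rho$ and $\hat{\rho}\circ\Psi$ share $q\geq 6=(1+4)+1$ distinct values ${\alpha}_{1},\ldots,{\alpha}_{q}\in\RC$, in the sense that $\rho^{-1}({\alpha}_{j})=(\hat{\rho}\circ\Psi)^{-1}({\alpha}_{j})$ for $1\leq j\leq q$. By the definition of weak completeness, the assumption that $f(\Sigma)$ or $\widehat{f}(\widehat{\Sigma})$ is weakly complete means that either $ds^{2}_{\Lc}$ or $ds^{2}_{\widehat{\Lc}}$ is complete on $\Sigma$ or $\widehat{\Sigma}$. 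We may then apply Theorem \ref{thm2-5} with $m=1$ to the pair $(\Sigma,ds^{2}_{\Lc})$ and $(\widehat{\Sigma},ds^{2}_{\widehat{\Lc}})$, whose hypothesis $q\geq m+5=6$ is satisfied. The conclusion is $\rho\equiv\hat{\rho}\circ\Psi$, contradicting the assumption of the theorem.

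The only nontrivial step is checking that the setup of Theorem \ref{thm2-5} applies verbatim; the remainder is bookkeeping. In particular, one must verify that the conformal diffeomorphism $\Psi$ between the underlying Riemann surfaces is the same object in both contexts (it is, since the complex structure on $\Sigma$ is the one used both to define $\rho$ as a meromorphic function and to form the conformal metric $ds^{2}_{\Lc}$), and that nonconstancy of $\rho$ and $\hat{\rho}$ required by Theorem \ref{thm2-5} is not an issue: if $\rho$ were constant then $\rho\equiv\hat{\rho}\circ\Psi$ would follow from sharing even a single value, contradicting $\rho\not\equiv\hat{\rho}\circ\Psi$, and similarly for $\hat{\rho}$. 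Hence no separate handling of constant cases is needed, and the bound $5=(1+4)$ is simply the instance $m=1$ of the general bound $m+4$ that Theorem \ref{thm2-5} forbids from being achieved nontrivially.
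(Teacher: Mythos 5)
Your main reduction is exactly the one the paper intends: the Legendrian lift exists globally because $\Sigma$ and $\widehat{\Sigma}$ are simply connected, $ds^{2}_{\Lc}=(1+|\rho|^{2})|\omega|^{2}$ is the metric (\ref{eq2-4}) with $m=1$, weak completeness of the front is by definition completeness of $ds^{2}_{\Lc}$, and Theorem \ref{thm2-5} with $q\geq m+5=6$ gives $\rho\equiv\hat{\rho}\circ\Psi$, i.e.\ the contrapositive stated in Theorem \ref{thm3-8}. This is precisely how the paper derives the result (it offers no further argument beyond citing Theorem \ref{thm2-5}).

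However, your dismissal of the constant case is not correct. You claim that if $\rho\equiv c$ were constant, then sharing even a single value would force $\rho\equiv\hat{\rho}\circ\Psi$. But ``sharing $\alpha$'' here means equality of preimages: if $\alpha\neq c$ this says only that $(\hat{\rho}\circ\Psi)^{-1}(\alpha)=\emptyset$, i.e.\ that $\hat{\rho}$ omits $\alpha$; it implies nothing about $\hat{\rho}$ being the constant $c$. Only sharing the value $c$ itself (so that $(\hat{\rho}\circ\Psi)^{-1}(c)=\Sigma$) would give $\hat{\rho}\equiv c$. In fact the degenerate case is a genuine exception rather than a triviality: if $\rho\equiv c$ and $\hat{\rho}\equiv\hat{c}$ with $c\neq\hat{c}$, the two maps share every value outside $\{c,\hat{c}\}$ while $\rho\not\equiv\hat{\rho}\circ\Psi$. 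This is exactly why the analogous Theorem \ref{thm3-5} for improper affine fronts carries the extra alternative ``or $\nu$ and $\hat{\nu}$ are both constant,'' and the statement of Theorem \ref{thm3-8} should be read with a corresponding nonconstancy proviso. The remaining mixed case (say $\rho\equiv c$ constant and $\hat{\rho}$ nonconstant) can be excluded, but it needs an actual argument --- e.g.\ completeness of a flat metric $|\omega|^{2}$ on a simply connected surface forces $\Sigma$ (hence $\widehat{\Sigma}$) to be conformally $\C$, and then $\hat{\rho}$, which would have to omit at least five of the six shared values, contradicts Picard --- not the one-line implication you give. So: right theorem, right reduction, but the ``no separate handling of constant cases is needed'' step fails as written.
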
 

\begin{remark}\label{rmk3-9}
The hyperbolic Gauss map of a weakly complete or complete flat front in ${\H}^{3}$ has also interesting geometric property. 
For more details, see \cite{KRSUY2005}, \cite{KUY2003}, \cite{KUY2004} and \cite{MUY2014}. 
\end{remark}

\section{Further topics}\label{sec-further}
In this section, we give geometric interpretations of the maximal number of exceptional values and unicity theorem 
for the Gauss maps of complete minimal surfaces in ${\R}^{4}$. We also provide an effective estimate for the maximal number 
of exceptional values of the Gauss map of a nonflat complete minimal surface of finite total curvature in ${\R}^{3}$. 

\subsection{Gauss map of a complete minimal surface in ${\R}^{4}$}\label{further-4}

We first give an optimal estimate for the size of the image of the holomorphic map 
$G=(g_{1}, \ldots, g_{n})\colon \Sigma \to (\RC)^{n}:=
\underbrace{\RC\times \cdots \times \RC}_{n}$ on an open Riemann surface $\Sigma$ with the complete conformal metric 
\begin{equation}\label{equ4-0}
ds^{2}= \prod_{i=1}^{n}(1+|g_{i}|^{2})^{m_{i}}|\omega|^{2}. 
\end{equation}

\begin{theorem}[{\cite[Theorem 2.1]{AAIK2017}}]\label{thm4-1}
Let $\Sigma$ be an open Riemann surface with the conformal metric 
$$
ds^{2}=\displaystyle \prod_{i=1}^{n}(1+|g_{i}|^{2})^{m_{i}}|\omega|^{2}, 
$$
where $G=(g_{1}, \ldots , g_{n})\colon \Sigma \to (\RC)^{n}$ is a holomorphic map, $\omega$ is a holomorphic 
$1$-form on $\Sigma$ and each $m_{i}$ $(i=1, \cdots, n)$ is a positive integer. Assume that $g_{i_{1}}, \ldots, g_{i_{k}}$
$(1\leq i_{1}< \cdots <i_{k} \leq n)$ are nonconstant and the others are constant. If the metric $ds^{2}$ is complete and 
each $g_{i_{l}}$ $(l=1, \cdots , k)$ omits $q_{i_{l}}> 2$ distinct values, then we have 
\begin{equation}\label{equ-4-2}
\displaystyle \sum_{l=1}^{k} \dfrac{m_{i_{l}}}{q_{i_{l}}-2}\geq 1. 
\end{equation}
\end{theorem}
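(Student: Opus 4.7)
The approach generalises the proof of Corollary \ref{thm2-3}: I would first establish a curvature bound analogous to Theorem \ref{thm2-1} under the stronger assumption that the claimed inequality fails, and then conclude from completeness exactly as in that corollary. More precisely, I would prove that if $\sum_{l=1}^{k} m_{i_{l}}/(q_{i_{l}}-2) < 1$ (the negation of (\ref{equ-4-2})) and each $g_{i_{l}}$ omits $q_{i_{l}}$ distinct values, then there is a constant $C$, depending on the data but not on $\Sigma$, such that $|K_{ds^{2}}(p)|^{1/2} \leq C/d(p)$ for all $p\in\Sigma$. Completeness of $ds^{2}$ then forces $d(p)=+\infty$ and hence $K_{ds^{2}}\equiv 0$; but a direct computation gives
\begin{equation*}
K_{ds^{2}} = -\dfrac{1}{\rho}\sum_{l=1}^{k} \dfrac{2m_{i_{l}}\,|(g_{i_{l}})_{z}'|^{2}}{(1+|g_{i_{l}}|^{2})^{2}\,|\hat\omega|^{2}}
\end{equation*}
in a local coordinate $z$ (writing $\omega=\hat\omega\,dz$ and $\rho$ for the density of $ds^{2}$), so $K_{ds^{2}}\equiv 0$ forces every nonconstant $g_{i_{l}}$ to be constant, a contradiction.

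\textbf{Construction of the pseudo-metric.} To establish the curvature bound I would imitate the Fujimoto pseudo-metric technique underlying Theorems \ref{thm2-1} and \ref{thm2-2}. Choose reals $\lambda_{l}$ with $m_{i_{l}}/(q_{i_{l}}-2) < \lambda_{l}$ and set $\delta := 1-\sum_{l}\lambda_{l} > 0$. Let $\alpha_{i_{l},1},\dots,\alpha_{i_{l},q_{i_{l}}}\in\RC$ be the exceptional values of $g_{i_{l}}$ and put $\Sigma' := \Sigma \setminus \bigcup_{l,j} g_{i_{l}}^{-1}(\alpha_{i_{l},j})$. On $\Sigma'$ (with zeros of $\omega$ removed) consider the conformal pseudo-metric
\begin{equation*}
d\tau^{2} := \prod_{l=1}^{k} \left(\dfrac{|dg_{i_{l}}|^{q_{i_{l}}}}{(1+|g_{i_{l}}|^{2})^{m_{i_{l}}/\lambda_{l}+1}\prod_{j=1}^{q_{i_{l}}} |g_{i_{l}}-\alpha_{i_{l},j}|\cdot\bigl(\log(R_{l}/|g_{i_{l}}-\alpha_{i_{l},j}|)\bigr)^{1+\varepsilon}}\right)^{2\lambda_{l}/(q_{i_{l}}-2)} |\omega|^{2\delta},
\end{equation*}
for small $R_{l}>0$ and $\varepsilon>0$. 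The strict inequality $\lambda_{l}(q_{i_{l}}-2)>m_{i_{l}}$ ensures that each factor yields a strictly negative curvature contribution, while the logarithmic regularisers are tailored to handle the behaviour at the $\alpha_{i_{l},j}$-preimages. Since the $g_{i_{l}}$ are independent holomorphic functions, the $\partial\bar{\partial}\log$ cross terms vanish and the contributions add, so that $K_{d\tau^{2}}\leq -1$ holds on $\Sigma'$.

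\textbf{Ahlfors--Schwarz comparison.} Applying the generalised Schwarz--Ahlfors lemma to $d\tau^{2}$ on a holomorphic disk of radius $d(p)$ centred at each $p\in\Sigma'$ yields an upper bound for $d\tau^{2}$ in terms of the Poincar\'e metric of that disk. Translating this bound back through the factor $|\omega|^{2\delta}$, together with the fact that $\delta > 0$ provides enough room to absorb the integrability obstructions near the $\alpha_{i_{l},j}$-preimages, produces after a calculation the desired estimate $|K_{ds^{2}}(p)|^{1/2} \leq C/d(p)$. Combined with the opening paragraph, this completes the proof.

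\textbf{Main obstacle.} The most delicate step is the curvature estimate $K_{d\tau^{2}}\leq -1$ in the multi-function setting with the logarithmic regularisers in place. Additivity of the $\partial\bar{\partial}\log$-contributions from independent $g_{i_{l}}$ is formal, but keeping all constants uniform and ensuring a strict negative bound (rather than merely $\leq 0$) requires that the slack $\lambda_{l}-m_{i_{l}}/(q_{i_{l}}-2)>0$ is used effectively for every $l$ simultaneously, and that $R_{l}$, $\varepsilon$ are chosen compatibly with the finite collection of exceptional values across all $l$. Propagating this strict bound through the Ahlfors--Schwarz comparison to obtain the explicit dependence on $d(p)$ is the technical heart of the argument.
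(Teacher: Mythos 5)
Your overall architecture is the right one and is exactly the route suggested by the survey itself: argue by contraposition, prove a Theorem~\ref{thm2-1}--type curvature estimate for the product metric under the assumption $\sum_{l} m_{i_{l}}/(q_{i_{l}}-2)<1$, and then let completeness force $K_{ds^{2}}\equiv 0$, which (as your curvature formula shows, up to harmless constant/normalization slips) makes every $g_{i_{l}}$ constant. Note, though, that this survey states Theorem~\ref{thm4-1} without proof, citing \cite{AAIK2017}; the proof there (following Fujimoto's treatment of the ${\R}^{4}$ case) does not go through a curvature bound at all, but instead builds a \emph{flat} metric of the form $|\hat\omega|^{2/(1-\sum\lambda_{l})}\prod_{l}\bigl(\prod_{j}|g_{i_{l}}-\alpha_{i_{l},j}|^{1-\eta_{l}}/|g_{i_{l}}'|\bigr)^{2\lambda_{l}/(1-\sum\lambda_{l})}|dz|^{2}$ on the complement of the relevant preimages, passes to the universal cover, and exhibits a divergent curve of finite $ds$-length, contradicting completeness directly. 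Your choice of weights $\lambda_{l}>m_{i_{l}}/(q_{i_{l}}-2)$ with $\delta=1-\sum_{l}\lambda_{l}>0$ is precisely the normalization that makes either route work, so you have correctly located where the hypothesis enters.

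The genuine gap is that the technical heart of your argument is asserted rather than carried out, and the one object you do write down explicitly is not viable as stated. Your $d\tau^{2}$ is not a conformal pseudo-metric: each factor $|dg_{i_{l}}|^{q_{i_{l}}}$ raised to the power $2\lambda_{l}/(q_{i_{l}}-2)$ contributes degree $2\lambda_{l}q_{i_{l}}/(q_{i_{l}}-2)$ in $|dz|$, and together with $|\omega|^{2\delta}$ the total degree is $2+\sum_{l}4\lambda_{l}/(q_{i_{l}}-2)\neq 2$, so the expression does not define a metric of the form $h\,|dz|^{2}$ and the Ahlfors--Schwarz lemma cannot be applied to it. Moreover the exponent pattern (first powers of $|g_{i_{l}}-\alpha_{i_{l},j}|$ with $\log^{1+\varepsilon}$ regularizers, and $(1+|g_{i_{l}}|^{2})^{m_{i_{l}}/\lambda_{l}+1}$) does not match the constructions that are known to give $K_{d\tau^{2}}\leq -1$; in Fujimoto's scheme the omitted points enter through exponents $1-\eta_{l}$ strictly less than $1$, with $\eta_{l}$ chosen so that $q_{i_{l}}-2-q_{i_{l}}\eta_{l}>m_{i_{l}}/\lambda_{l}$, and verifying the negative curvature bound and the integrability near the punctures is a computation, not a formal additivity statement. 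Likewise, ``translating the bound back through $|\omega|^{2\delta}$'' to obtain $|K_{ds^{2}}(p)|^{1/2}\leq C/d(p)$ is exactly the estimate one must prove. As it stands, your proposal reduces Theorem~\ref{thm4-1} to an unproved product-metric analogue of Theorem~\ref{thm2-1}, so the essential content of the proof is still missing.
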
 

We note that Theorem \ref{thm4-1} also holds for the case where at least one of $m_{1}, \ldots, m_{n}$ is positive and 
the others are zeros. For instance, we assume that $g:= g_{i_{1}}$ is nonconstant and the others are constant. 
If $m:=m_{i_{1}}$ is a positive integer and the others are zeros, then inequality (\ref{equ4-0}) coincides with 
$$
\dfrac{m}{q-2}\geq 1 \, \Longleftrightarrow  \, q \leq m+2,  
$$
where $q:=q_{i_{1}}$. The result corresponds with Corollary \ref{thm2-3}. 

We next give a unicity theorem for the holomorphic map $G=(g_{1}, \ldots, g_{n})\colon \Sigma \to (\RC)^{n}$ on 
an open Riemann surface $\Sigma$ with the complete conformal metric defined by (\ref{equ4-0}). 

\begin{theorem}[{\cite[Theorem 2.1]{HK2017}}]\label{thm4-2}
Let $\Sigma$ be an open Riemann surface with the conformal metric 
$$
ds^{2}=\displaystyle \prod_{i=1}^{n}(1+|g_{i}|^{2})^{m_{i}}|\omega|^{2} 
$$
and $\widehat{\Sigma}$ another open Riemann surface with the conformal metric 
$$
d\hat{s}^{2}= \displaystyle \prod_{i=1}^{n}(1+|\hat{g}_{i}|^{2})^{m_{i}}|\hat{\omega}|^{2}, 
$$
where $\omega$ and $\hat{\omega}$ are holomorphic $1$-forms, $G$ and $\widehat{G}$ are holomorphic maps 
into $(\RC)^{n}$ on $\Sigma$ and $\widehat{\Sigma}$ respectively, 
and each $m_{i}$ $(i=1, \cdots, n)$ is a positive integer. 
We assume that there exists a conformal diffeomorphism $\Psi \colon \Sigma \to 
\widehat{\Sigma}$, and $g_{i_{1}}, \ldots, g_{i_{k}}$ and $\hat{g}_{i_{1}}, \ldots, \hat{g}_{i_{k}}$ $(1\leq i_{1}< \cdots < i_{k}\leq n)$ are 
nonconstant and the others are constant. For each $i_{l}$ $(l=1, \cdots, k)$, we suppose that 
$g_{i_{l}}$ and $\hat{g}_{i_{l}}\circ \Psi$ share $q_{i_{l}}> 4$ distinct values and $g_{i_{l}} \not\equiv \hat{g}_{i_{l}}\circ \Psi$.  
If either $ds^{2}$ or $d\hat{s}^{2}$ is complete, then we have 
\begin{equation}\label{eq-4-3}
\displaystyle \sum_{l=1}^{k} \dfrac{m_{i_{l}}}{q_{i_{l}}-4}\geq 1. 
\end{equation}
\end{theorem}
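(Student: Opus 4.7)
The plan is to argue by contradiction along the lines of the proof of Theorem \ref{thm2-5}. Suppose $\sum_{l=1}^{k} m_{i_l}/(q_{i_l}-4)<1$, and assume $ds^{2}$ is complete (the other case is handled symmetrically by composing with $\Psi^{-1}$). Passing to the universal cover, we may take $\Sigma$ to be the unit disk or $\C$. For each $l=1,\ldots,k$, let $\alpha_{i_l,1},\ldots,\alpha_{i_l,q_{i_l}}\in\RC$ denote the values shared by $g_{i_l}$ and $\hat{g}_{i_l}\circ\Psi$. The strict form of the contradictory hypothesis lets us choose positive real numbers $\lambda_{1},\ldots,\lambda_{k}$ with $\lambda_{l}(q_{i_l}-4)>m_{i_l}$ for each $l$, together with auxiliary exponents $\eta_{l,j}$ satisfying the combinatorial inequalities dictated by the curvature computation below.

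The key step is to construct a conformal pseudo-metric of the form
$$
d\chi^{2}=\prod_{l=1}^{k}\left|\frac{g_{i_l}-\hat{g}_{i_l}\circ\Psi}{\prod_{j=1}^{q_{i_l}}(g_{i_l}-\alpha_{i_l,j})^{\eta_{l,j}}(\hat{g}_{i_l}\circ\Psi-\alpha_{i_l,j})^{\eta_{l,j}}}\right|^{2\lambda_{l}}\!\!\cdot(\text{balancing factor})\cdot ds^{2},
$$
whose Gaussian curvature is bounded above by a strictly negative constant. The single-variable model for this construction is precisely the Fujimoto auxiliary pseudo-metric that underlies Theorem \ref{thm2-5}; the multivariable version works because $\sqrt{-1}\,\partial\bar{\partial}\log$ of a product splits as a sum, so each factor indexed by $l$ contributes its own piece of the curvature independently. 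The hypotheses $q_{i_l}>4$ and $g_{i_l}\not\equiv\hat{g}_{i_l}\circ\Psi$ are exactly what make the individual pieces strictly negative, while the inequalities $\lambda_{l}(q_{i_l}-4)>m_{i_l}$ ensure they dominate the positive contribution $m_{i_l}$ coming from the $(1+|g_{i_l}|^{2})^{m_{i_l}}$ factor of $ds^{2}$.

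Once the curvature bound is in hand, the Ahlfors--Schwarz lemma yields $d\chi^{2}\leq C\,d\sigma^{2}$, where $d\sigma^{2}$ is the Poincar\'e metric on the universal cover. The final contradiction is produced by choosing a divergent curve $\Gamma$ in $\Sigma$ (of infinite $ds^{2}$-length, available by completeness) and verifying via H\"older's inequality that the balancing factor is locally integrable against $d\chi$ along $\Gamma$; one then concludes that $\int_{\Gamma}ds$ is finite, contradicting completeness. The main obstacle is the simultaneous optimization of the parameters $\lambda_{l}$ and $\eta_{l,j}$ so that the curvature decomposition yields a \emph{uniform} negative bound across all $k$ factors together with the integrability required in the last step; the feasibility of this linear-inequality system is precisely the negation of (\ref{eq-4-3}), which is the geometric content of the theorem.
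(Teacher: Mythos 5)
First, a point of reference: this survey does not actually prove Theorem \ref{thm4-2}; it is quoted from Theorem 2.1 of \cite{HK2017}, so there is no in-paper proof to compare against. Your outline does follow the strategy of the cited source (and of Fujimoto's original unicity theorem): negate (\ref{eq-4-3}), build an auxiliary pseudo-metric whose numerator carries $|g_{i_l}-\hat g_{i_l}\circ\Psi|$ and whose denominator carries the shared values, and contradict completeness. The skeleton is the right one.

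As written, though, the proposal has genuine gaps at exactly the two places where the work lies. (1) The ``balancing factor'' and the exponents $\eta_{l,j}$, $\lambda_l$ are never pinned down, yet this is where the hypothesis enters: the negation $\sum_l m_{i_l}/(q_{i_l}-4)<1$ must be converted into a consistent choice with $\lambda_l>m_{i_l}/(q_{i_l}-4)$ and $\sum_l\lambda_l<1$, and the reason the relevant count is $q_{i_l}-4$ rather than $q_{i_l}-2$ --- namely that the sharing hypothesis forces $g_{i_l}-\hat g_{i_l}\circ\Psi$ to vanish on $\bigcup_j g_{i_l}^{-1}(\alpha_{i_l,j})$, so the numerator can absorb the singularities of the denominator at the cost of two units in the exponent count --- is asserted but never derived. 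Without the pointwise lemma bounding the quantity $|g-\hat g|\,|g'|\,|\hat g'|\big/\prod_j|g-\alpha_j|^{1-\eta}|\hat g-\alpha_j|^{1-\eta}$ against powers of $(1+|g|^2)(1+|\hat g|^2)$, the claimed uniform negative curvature bound is unsupported, and this lemma is the heart of the proof. (2) The final step does not work as stated: the Ahlfors--Schwarz conclusion $d\chi^2\le C\,d\sigma^2$ against the Poincar\'e metric cannot by itself give $\int_\Gamma ds<\infty$, because every divergent curve has \emph{infinite} Poincar\'e length. The actual contradiction in the Fujimoto scheme comes from restricting to the open subset $\Sigma'$ where the pseudo-metric is nondegenerate (deleting zeros of the $g_{i_l}'$ and the preimages of the $\alpha_{i_l,j}$, a deletion you do not address), taking a maximal disk with a local isometry $\Phi\colon\Delta_{R_0}\to\Sigma'$, and estimating the $ds$-length of the image of a radius by an integral of the form $\int_0^{R_0}(R_0^2-r^2)^{-\mu}\,dr$ with $\mu<1$; the exponents must be chosen precisely so that $\mu<1$. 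Your H\"older remark gestures at this but no such estimate is set up. Until (1) and (2) are supplied the argument is a plausible plan rather than a proof.
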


We remark that Theorem \ref{thm4-2} also holds for the case where at least one of $m_{1}, \ldots, m_{n}$ is positive 
and the others are zeros. For instance, we assume that $g:= g_{i_{1}}$ and $\hat{g}:= \hat{g}_{i_{1}}$ are nonconstant and the others 
are constant. If $m:= m_{i_{1}}$ is a positive integer and the others are zeros, then inequality (\ref{eq-4-3})  coincides with 
$$
\dfrac{m}{q-4}\geq 1 \, \Longleftrightarrow  \, q \leq m+4,  
$$
where $q:= q_{i_{1}}$. The result corresponds with Theorem \ref{thm2-5}. 

We will apply these results to the Gauss maps of complete minimal surfaces in ${\R}^{4}$. We briefly summarize here basic facts 
on minimal surfaces in ${\R}^{4}$. For more details, we refer the reader to \cite{Ch1965, HO1980, HO1985, Os1964}. 
Let $X=(x^{1}, x^{2}, x^{3}, x^{4})\colon \Sigma \to {\R}^{4}$ be an oriented minimal 
surface in ${\R}^4$. By associating a local complex coordinate $z=u+\sqrt{-1}v$ with each positive isothermal coordinate system 
$(u, v)$, $\Sigma$ is considered as a Riemann surface whose conformal metric is the induced metric $ds^{2}$ from ${\R}^{4}$. 
With respect to the local complex coordinate $z=u+\sqrt{-1}v$ of the surface, it holds that 
$$
\bar{\partial} \partial X =0, 
$$
where $\partial =(\partial /\partial u - \sqrt{-1}\partial /\partial v)/2$, $\bar{\partial} 
=(\partial /\partial u + \sqrt{-1}\partial /\partial v)/2$. Hence each ${\phi}_{i}:= \partial x^{i} dz$ ($i=1, 2, 3, 4$) is a 
holomorphic $1$-form on $\Sigma$. If we set 
$$
\omega = {\phi}_{1} -\sqrt{-1} {\phi}_{2}, \qquad g_{1}=\dfrac{{\phi}_{3}+\sqrt{-1}{\phi}_{4}}{{\phi}_{1} -\sqrt{-1} {\phi}_{2}}, 
\qquad g_{2}=\dfrac{-{\phi}_{3}+\sqrt{-1}{\phi}_{4}}{{\phi}_{1} -\sqrt{-1} {\phi}_{2}}, 
$$
then $\omega$ is a holomorphic $1$-form, and $g_{1}$ and $g_{2}$ are meromorphic functions on $\Sigma$. 
Moreover the holomorphic map $G:=(g_{1}, g_{2})\colon \Sigma \to \RC \times \RC$ coincides with the Gauss map of $X(\Sigma)$. 
We remark that the Gauss map of $X(\Sigma)$ in ${\R}^{4}$ is the map from each point of $\Sigma$ to its oriented tangent plane, 
the set of all oriented (tangent) planes in ${\R}^{4}$ is naturally identified with the quadric 
$$
\mathbf{Q}^{2}(\C) =\{[w^{1}: w^{2}: w^{3}: w^{4}] \in \mathbf{P}^{3}(\C) \, ;\, (w^{1})^{2}+\cdots +(w^{4})^{2} = 0\}
$$
in $\mathbf{P}^{3}(\C)$, and $\mathbf{Q}^{2}(\C)$ is biholomorphic to the product of the Riemann spheres $\RC \times \RC$. 
Furthermore the induced metric from ${\R}^{4}$ is given by 
\begin{equation}\label{equ4-4}
ds^{2}= (1+|g_{1}|^{2})(1+|g_{2}|^{2})|\omega|^{2}. 
\end{equation}

Applying Theorem \ref{thm4-1} to the induced metric, we can obtain the Fujimoto theorem for the Gauss map of a 
complete minimal surface in ${\R}^{4}$. 

\begin{theorem}\cite[Theorem I\hspace{-.1em}I]{Fu1988}\label{thm-4-3}
Let $X\colon \Sigma \to {\R}^{4}$ be a complete nonflat minimal surface and 
$G=(g_{1}, g_{2})\colon \Sigma \to \RC \times \RC $ the Gauss map of $X(\Sigma)$. 
\begin{enumerate}
\item[(i)] Assume that $g_{1}$ and $g_{2}$ are both nonconstant and omit $q_{1}$ and $q_{2}$ distinct values respectively. 
If $q_{1}> 2$ and $q_{2}> 2$, then we have 
\begin{equation}\label{equ4-5}
\dfrac{1}{q_{1}-2}+\dfrac{1}{q_{2}-2}\geq 1. 
\end{equation}
\item[(i\hspace{-.1em}i)] If either $g_{1}$ or $g_{2}$, say $g_{2}$, is constant, then $g_{1}$ can omit at most 
$3$ distinct values.  
\end{enumerate}
\end{theorem}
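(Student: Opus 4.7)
The plan is to observe that Theorem \ref{thm-4-3} is an essentially immediate application of Theorem \ref{thm4-1} (together with the remark following it) to the induced metric (\ref{equ4-4}) of a minimal surface in ${\R}^{4}$. The key point is that (\ref{equ4-4}) is exactly of the form (\ref{equ4-0}) with $n = 2$ and $m_{1} = m_{2} = 1$, and that completeness of the immersion $X\colon \Sigma \to {\R}^{4}$ is by definition completeness of the induced metric $ds^{2}$ on $\Sigma$.

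For part (i), I would apply Theorem \ref{thm4-1} with $n = k = 2$, $(i_{1}, i_{2}) = (1, 2)$, and $m_{i_{1}} = m_{i_{2}} = 1$. Since both $g_{1}$ and $g_{2}$ are nonconstant by assumption, and each $g_{i}$ omits $q_{i} > 2$ distinct values, inequality (\ref{equ-4-2}) becomes
\[
\dfrac{1}{q_{1}-2} + \dfrac{1}{q_{2}-2} \,\geq\, 1,
\]
which is precisely (\ref{equ4-5}).

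For part (ii), suppose without loss of generality that $g_{2}$ is constant, so that $c := 1 + |g_{2}|^{2}$ is a positive real number. Then the induced metric factorizes as
\[
ds^{2} \,=\, c\,(1+|g_{1}|^{2})\,|\omega|^{2} \,=\, (1+|g_{1}|^{2})\,|\omega'|^{2},
\]
where $\omega' := \sqrt{c}\,\omega$ is again a holomorphic $1$-form on $\Sigma$. This is a complete conformal metric of the exact form (\ref{eq2-1}) with $m = 1$, so Corollary \ref{thm2-3} applies to the nonconstant meromorphic function $g_{1}$ and yields at most $m + 2 = 3$ exceptional values. (Equivalently, one can invoke the remark following Theorem \ref{thm4-1} with $k = 1$, $m_{i_{1}} = 1$, and $m_{i_{2}} = 0$, which reduces (\ref{equ-4-2}) to $\frac{1}{q_{1}-2} \geq 1$, i.e., $q_{1} \leq 3$.)

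There is no serious obstacle once Theorem \ref{thm4-1} is in hand: the entire content of Theorem \ref{thm-4-3} lies in the identification of (\ref{equ4-4}) with the template (\ref{equ4-0}) and in the case split according to whether both components of $G$ are nonconstant. The only mild subtlety worth mentioning is that the hypothesis \emph{nonflat} guarantees that at least one of $g_{1}, g_{2}$ is nonconstant (if both were constant, the quadric identification $\mathbf{Q}^{2}(\C) \cong \RC \times \RC$ would force the tangent plane to be constant and hence $X(\Sigma)$ to be contained in an affine $2$-plane), so the dichotomy in parts (i) and (ii) is exhaustive. The genuine analytic work — namely the Fujimoto-type curvature estimate underpinning Theorem \ref{thm4-1} — has already been done upstream.
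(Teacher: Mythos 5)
Your proposal is correct and follows essentially the same route as the paper: both parts are read off from Theorem \ref{thm4-1} (and the remark following it) applied to the induced metric (\ref{equ4-4}) with $m_{1}=m_{2}=1$. Your additional observations --- absorbing the constant factor $1+|g_{2}|^{2}$ into $\omega$ so as to invoke Corollary \ref{thm2-3} directly in case (ii), and the remark that nonflatness makes the dichotomy exhaustive --- are harmless elaborations of the same argument.
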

\begin{proof}
We first show (i). Since $g_{1}$ and $g_{2}$ are both nonconstant and $m_{1}=m_{2}=1$ from (\ref{equ4-4}), 
we can prove inequality (\ref{equ4-5}) by Theorem \ref{thm4-1}. Next we show (i\hspace{-.1em}i). 
If we set that $g_{1}$ omits $q_{1}$ values, then we obtain 
$$
\dfrac{1}{q_{1}-2}\geq 1
$$
from Theorem \ref{thm4-1} because $m_{1}=1$. Thus we have $q_{1}\leq 3$. 
\end{proof}

Hence we reveal that the Fujimoto theorem depends on the orders of the factors $(1+|g_{1}|^{2})$ and $(1+|g_{2}|^{2})$ in 
the induced metric from ${\R}^{4}$ and the Euler characteristic of the Riemann sphere $\RC$. 
In \cite{AAIK2017}, we give some applications of this theorem, for example, to provide optimal results for the maximal number of 
exceptional values of the nontrivial part of the Gauss map of a complete minimal Lagrangian surface in the complex 2-space 
${\C}^{2}$ and the generalized Gauss map of a complete nonorientable minimal surface in ${\R}^{4}$. 

In the same way, by Theorem \ref{thm4-2}, we obtain a unicity theorem for the Gauss maps of complete minimal surfaces in ${\R}^{4}$. 

\begin{theorem}\cite[Theorem 1.2]{HK2017}\label{thm-cor}
Let $X\colon \Sigma \to {\R}^{4}$ and $\widehat{X}\colon \widehat{\Sigma} \to {\R}^{4}$ be two nonflat minimal surfaces, 
and $G=(g_{1}, g_{2})\colon \Sigma \to \RC\times \RC$, $\widehat{G}=(\hat{g}_{1}, \hat{g}_{2})\colon \widehat{\Sigma} 
\to \RC\times \RC$ the Gauss maps of $X(\Sigma)$, $\widehat{X}(\widehat{\Sigma})$ respectively. 
We assume that there exists a conformal diffeomorphism $\Psi \colon \Sigma \to \widehat{\Sigma}$ and 
either $X(\Sigma)$ or $\widehat{X}(\widehat{\Sigma})$ is complete. 
\begin{enumerate}
\item[(i)] Assume that $g_{1}, g_{2}, \hat{g}_{1}, \hat{g}_{2}$ are nonconstant and, for each $i$ $(i=1, 2)$, 
$g_{i}$ and $\hat{g}_{i}\circ \Psi$ share $p_{i} >4$ distinct values. 
If $g_{1}\not\equiv \hat{g}_{1}\circ \Psi$ and $g_{2}\not\equiv \hat{g}_{2}\circ \Psi$, then we have
\begin{equation}\label{eq4-6}
\dfrac{1}{p_{1}-4}+\dfrac{1}{p_{2}-4}\geq 1. 
\end{equation}
In particular, if $p_{1}\geq 7$ and $p_{2}\geq 7$, then either $g_{1}\equiv \hat{g}_{1}\circ \Psi$ or 
$g_{2}\equiv \hat{g}_{2}\circ \Psi$, or both hold. 
\item[(i\hspace{-.1em}i)] Assume that $g_{1}, \hat{g}_{1}$ are nonconstant, 
and $g_{1}$ and $\hat{g}_{1}\circ \Psi$ share $p$ distinct values. 
If $g_{1}\not\equiv \hat{g}_{1}\circ \Psi$ and $g_{2}\equiv \hat{g}_{2}\circ \Psi$ is constant, then we have $p\leq 5$. 
In particular, if $p\geq 6$, then $G\equiv \widehat{G}\circ \Psi$.
\end{enumerate}
\end{theorem}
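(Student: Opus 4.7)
The plan is to deduce Theorem \ref{thm-cor} as a direct application of Theorem \ref{thm4-2}. From (\ref{equ4-4}) the induced metric of a minimal surface in $\R^{4}$ is
\[ds^{2}=(1+|g_{1}|^{2})(1+|g_{2}|^{2})|\omega|^{2},\]
which matches the shape of the metric in Theorem \ref{thm4-2} with $n=2$ and $m_{1}=m_{2}=1$; the same is true of $d\hat{s}^{2}$. Completeness of $X(\Sigma)$ (resp.\ $\widehat{X}(\widehat{\Sigma})$) is exactly completeness of $ds^{2}$ (resp.\ $d\hat{s}^{2}$), so the completeness hypothesis of Theorem \ref{thm4-2} is automatic from the statement.

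For part (i), the hypotheses furnish four nonconstant components, each pair $g_{i}$ and $\hat{g}_{i}\circ\Psi$ sharing $p_{i}>4$ values, with $g_{i}\not\equiv\hat{g}_{i}\circ\Psi$ for $i=1,2$. I would invoke Theorem \ref{thm4-2} with $k=2$, $i_{1}=1$, $i_{2}=2$, $q_{i_{l}}=p_{l}$ and $m_{i_{l}}=1$; inequality (\ref{eq-4-3}) then reads exactly as (\ref{eq4-6}). The ``in particular'' claim is then a one-line contradiction: if $p_{1},p_{2}\geq 7$ and both $g_{i}\not\equiv\hat{g}_{i}\circ\Psi$, then (\ref{eq4-6}) would force $\tfrac{1}{3}+\tfrac{1}{3}\geq 1$, which is false.

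For part (ii), since $g_{2}\equiv\hat{g}_{2}\circ\Psi$ is constant, both $g_{2}$ and $\hat{g}_{2}$ are constant, so the factors $(1+|g_{2}|^{2})$ and $(1+|\hat{g}_{2}|^{2})$ are positive constants which can be absorbed into $\omega$ and $\hat{\omega}$. The metrics therefore fit the framework of Theorem \ref{thm4-2} with only one positive weight, $m_{1}=1$, as discussed in the remark following that theorem. Applying it with $k=1$, $i_{1}=1$, $m_{i_{1}}=1$ gives $\tfrac{1}{p-4}\geq 1$, i.e.\ $p\leq 5$. The last assertion is the contrapositive: $p\geq 6$ forces $g_{1}\equiv\hat{g}_{1}\circ\Psi$, which combined with $g_{2}\equiv\hat{g}_{2}\circ\Psi$ yields $G\equiv\widehat{G}\circ\Psi$.

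There is essentially no analytic obstacle at this level: all the work is already packaged into Theorem \ref{thm4-2}. The only points that require attention are verifying that the Weierstrass-type representation (\ref{equ4-4}) produces exactly the form of metric to which Theorem \ref{thm4-2} applies, and handling part (ii) via the remark rather than as a separate argument. The argument thereby makes transparent that the ``$4$'' in the denominators of (\ref{eq4-6}) is the universal Nevanlinna offset from Theorem \ref{thm2-5}, while the factor $1$ in the numerators reflects the exponents $m_{1}=m_{2}=1$ intrinsic to the Weierstrass representation in $\R^{4}$.
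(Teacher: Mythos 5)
Your proposal is correct and is exactly the paper's (implicit) argument: the paper derives Theorem \ref{thm-cor} by applying Theorem \ref{thm4-2} to the induced metric (\ref{equ4-4}) with $m_{1}=m_{2}=1$, precisely as you do, in parallel with the written-out proof of Theorem \ref{thm-4-3}. The only cosmetic point is that in part (ii) the case $p\leq 4$ is trivially consistent with $p\leq 5$, so the appeal to Theorem \ref{thm4-2} (which requires sharing more than $4$ values) is only needed when $p>4$.
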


\subsection{Gauss map of a complete minimal surface of finite total curvature in ${\R}^{3}$}\label{further-algebraic}
We review some of the standard facts on complete minimal surfaces of finite total curvature in ${\R}^{3}$. 
Let $X=(x^{1}, x^{2}, x^{3})\colon \Sigma \to {\R}^{3}$ be an oriented minimal surface in ${\R}^{3}$. 
Set ${\phi}_{i}:= \partial x^{i} dz$ ($i=1, 2, 3$). These satisfy 
\begin{enumerate}
\item[(C)] $\sum {\phi}^{2}_{i}=0$: conformal condition, 
\item[(R)] $\sum |{\phi}_{i}|^{2}> 0$: regularity condition, 
\item[(P)] For every loop $\gamma \in H_{1}(\Sigma, \Z)$, $\Re \int_{\gamma} {\phi}_{i} =0$: period condition. 
\end{enumerate}
For the meromorphic function $g$ and holomorphic $1$-form $\omega$ given by (\ref{eq:W-date3}), 
$$
{\phi}_{1}=\dfrac{1}{2}(1-g^{2})\omega, \quad {\phi}_{2}=\dfrac{\sqrt{-1}}{2}(1+g^{2})\omega, \quad {\phi}_{3}= g\omega
$$
hold. We call $(\omega, g)$ the Weierstrass data (W-data, for short). If we are given the W-data on $\Sigma$, we get ${\phi}_{j}$'s 
by this formula. They satisfy condition (C) automatically, and condition (R) is interpreted as the poles of $g$ of order $l$ 
coincides exactly with the zeros of $\omega$ of order $2k$, because the induced metric $ds^{2}$ is given by (\ref{equ-412-metric}). 
In general, for a given meromorphic function $g$ on $\Sigma$, it is not so hard to find a holomorphic 1-form $\omega$ satisfying 
condition (R). However, the period condition (P) always causes trouble. The total curvature of $X(\Sigma)$ is given by 
$$
\tau (\Sigma) :=\displaystyle \int_{\Sigma} K_{ds^{2}}dA = -\int_{\Sigma} \dfrac{2\sqrt{-1}dg\wedge d\bar{g}}{(1+|g|^{2})^{2}}, 
$$
where $dA$ is the area element with respect to the metric $ds^{2}$. Note that $|\tau (\Sigma)|$ is the area of $\Sigma$ with respect to 
the metric induced from the Fubini-Study metric of the Riemann sphere $\RC$ by $g$.  

\begin{theorem}\label{thm-4-2-1}
A complete minimal surface of finite total curvature $X\colon \Sigma \to {\R}^{3}$ satisfies 
\begin{enumerate}
\item[(i)] $\Sigma$ is conformally to $\overline{\Sigma}_{\gamma}\backslash \{ p_{1},\ldots ,p_{k} \}$, 
where $\overline{\Sigma}_{\gamma}$ is a closed Riemann surface of genus $\gamma$ and 
$p_{1}, \ldots, p_{k}\in \overline{\Sigma}$ $($\cite{Hu1957}$)$, 
\item[(i\hspace{-.1em}i)]  The W-date $(\omega, g)$ can be extended meromorphically to 
$\overline{\Sigma}_{\gamma}$ $($\cite{Os1964}$)$. 
\end{enumerate}
\end{theorem}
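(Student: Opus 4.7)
The plan is to treat the two parts separately, since each is attributed to a different classical source.

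For part (i), Huber's theorem, the key observation is that minimality gives $K_{ds^{2}}\leq 0$ pointwise, so finite total curvature $\tau(\Sigma)>-\infty$ is equivalent to finite total absolute curvature $\int_{\Sigma}|K_{ds^{2}}|\,dA<\infty$. Under this hypothesis, I would first prove finite topological type by a Cohn--Vossen/Gauss--Bonnet argument: apply Gauss--Bonnet to an exhaustion by geodesic balls, bound the Euler characteristic in terms of the truncated total curvature, and conclude that $\Sigma$ is homeomorphic to a closed surface with finitely many ends. The harder step is then to show each end is \emph{conformally} a punctured disk, not an annulus of finite modulus. For this, one picks an end $E$, exhausts it by annular regions $A_{r}$ with $\int_{A_{r}}|K_{ds^{2}}|\,dA\to 0$, and uses extremal length / harmonic measure estimates (following Huber) to show that the conformal modulus of $E$ is infinite, so $E$ is conformally $\{0<|z|<1\}$. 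Gluing the punctures back on gives $\overline{\Sigma}_{\gamma}\setminus\{p_{1},\ldots,p_{k}\}$.

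The main obstacle in (i) is this conformal-type statement: bounding the total curvature controls geometry but not \emph{a priori} the conformal structure, and one must convert the curvature integrability into a statement about conformal capacity of the ends.

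For part (ii), I now work on a single end, which by (i) is biholomorphic to $\Delta^{*}=\{0<|z|<1\}$. I will show that both $g$ and $\omega$ extend meromorphically across $z=0$. For $g$: the total curvature of the end equals, up to sign, the spherical area of the image $g(\Delta^{*})$,
\begin{equation*}
-\int_{\Delta^{*}}\dfrac{2\sqrt{-1}\,dg\wedge d\bar{g}}{(1+|g|^{2})^{2}}=-\tau(\Delta^{*})<\infty,
\end{equation*}
so $g$ has finite spherical image area near the puncture. A meromorphic function on $\Delta^{*}$ with finite spherical image area cannot have an essential singularity at $0$ (if it did, by the Great Picard theorem it would cover almost every value infinitely often, forcing infinite spherical area), so $g$ extends meromorphically to $\Delta$. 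For $\omega$: write $\omega=f(z)\,dz$ with $f$ holomorphic on $\Delta^{*}$. Completeness of $ds^{2}=(1+|g|^{2})^{2}|\omega|^{2}$ implies $\int_{0}^{1}(1+|g(re^{i\theta})|^{2})|f(re^{i\theta})|\,dr=\infty$ along every radial divergent curve to $0$, which, combined with the now-known meromorphic behaviour of $g$ at $0$, forces $f$ to have at most a pole at $0$ (an essential singularity of $f$ would, via the Casorati--Weierstrass-type behaviour together with the regularity condition $\sum|\phi_{i}|^{2}>0$, contradict either completeness or the control on $g$). Hence $\omega$ is meromorphic on $\Delta$.

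The principal obstacle in (ii) is step one: upgrading the integral bound on the spherical image area into a removable-singularity statement for $g$. The cleanest route is a Picard/Montel argument; an alternative, perhaps more elementary, is to apply Theorem \ref{thm2-1} (or Corollary \ref{thm2-3}) directly to the universal cover of $\Delta^{*}$, using that a small punctured neighborhood of $0$ together with the restricted complete metric cannot host a $g$ omitting too many values. Once meromorphicity of $g$ and $\omega$ on each end is established, the compatibility with the period condition (P) is automatic, and the extensions assemble into global meromorphic data on $\overline{\Sigma}_{\gamma}$.
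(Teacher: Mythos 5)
The paper does not prove this statement at all: it is quoted as a classical fact with pointers to Huber \cite{Hu1957} for (i) and Osserman \cite{Os1964} for (ii), so there is no in-paper argument to measure you against. Your outline reconstructs the standard proofs from exactly those sources: $K_{ds^{2}}\leq 0$ for a minimal surface makes finite total curvature the same as finite total absolute curvature, Huber's theorem then gives finite topology and parabolic (punctured-disk) ends, and on each end the finiteness of $\int |K|\,dA$, read as the spherical area of the Gaussian image counted with multiplicity, rules out an essential singularity of $g$ by the Great Picard theorem. That part is correct and is the argument Osserman actually gives. Be aware, though, that in part (i) the sentence ``uses extremal length / harmonic measure estimates (following Huber)'' is not a proof but a citation in disguise; that estimate \emph{is} Huber's theorem, and it is the deepest single ingredient here. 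Deferring to it is legitimate for a survey-level statement, but you should say so explicitly rather than present it as a step you would carry out.

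The one place where your sketch would not survive being written out is the extension of $\omega$. Casorati--Weierstrass tells you only that $f$ comes arbitrarily close to $0$ on every punctured neighbourhood; to contradict completeness you must exhibit a single divergent path $\gamma\to 0$ along which $\int_{\gamma}(1+|g|^{2})|f|\,|dz|<\infty$, and producing such a path when $f$ has an essential singularity is a genuine construction, not a consequence of value-density. The standard route is: first rotate the surface so that $g$ has neither zero nor pole at the puncture (the paper performs the same normalization in the proof of Theorem \ref{thm4-2-3}), which reduces the problem to completeness of $|f|^{2}|dz|^{2}$ alone; then invoke the completeness lemma of Osserman (\cite[Lemma 9.6]{Os1986}, cited in that same proof, see also \cite{UY2011}), whose proof builds a finite-length divergent path out of the level behaviour of a primitive of $f$. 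With that lemma in place of the Casorati--Weierstrass remark, your argument closes; your final observation that the period condition plays no role in the extension is correct.
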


From this fact, we call such surfaces {\it algebraic minimal surfaces}. 
Osserman proved the following result for the number of exceptional values of the Gauss map of a complete minimal surface of finite total curvature in ${\R}^{3}$. 

\begin{theorem}\cite[Theorem 3]{Os1964}\label{thm4-2-2}
The Gauss map of a nonflat complete minimal surface of finite total curvature omits at most 3 values. 
\end{theorem}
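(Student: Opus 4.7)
The plan is to convert this into a counting problem on a compact Riemann surface via Theorem \ref{thm-4-2-1}, then combine the Riemann--Hurwitz formula with a lower bound on the degree of the Gauss map deduced from completeness.

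First, by Theorem \ref{thm-4-2-1}, $\Sigma = \overline{\Sigma}_{\gamma} \setminus \{p_{1}, \ldots, p_{k}\}$ and both $g$ and $\omega$ extend meromorphically to $\overline{\Sigma}_{\gamma}$. Since $X$ is nonflat, the extended Gauss map $g \colon \overline{\Sigma}_{\gamma} \to \RC$ is a nonconstant meromorphic function of some finite degree $d \geq 1$, hence surjective. Consequently, every value that $g$ omits on $\Sigma$ must be attained by the extended $g$ only at points among the punctures $p_{1}, \ldots, p_{k}$.

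Second, suppose for contradiction that $g$ omits $q \geq 4$ distinct values $\alpha_{1}, \ldots, \alpha_{q}$. Each $\alpha_{j}$ has exactly $d$ preimages in $\overline{\Sigma}_{\gamma}$ counted with multiplicity, all of them among the punctures. Writing $r_{i}$ for the ramification index of $g$ at $p_{i}$, we obtain
$$ qd \;=\; \sum_{j=1}^{q}\,\sum_{i:\, g(p_{i})=\alpha_{j}} r_{i} \;\leq\; \sum_{i=1}^{k} r_{i}, $$
and the Riemann--Hurwitz formula $\sum_{p \in \overline{\Sigma}_{\gamma}}(r_{p}-1) = 2d + 2\gamma - 2$ gives $\sum_{i=1}^{k}r_{i} \leq 2d + 2\gamma - 2 + k$, hence
$$ (q - 2)\, d \;\leq\; 2\gamma + k - 2. \qquad (\ast) $$

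For the matching lower bound, one analyzes $\omega$ locally at each puncture: completeness of $ds^{2} = (1+|g|^{2})^{2}|\omega|^{2}$ forces $\omega$ to have a pole (or a zero of controlled order) at $p_{i}$, the exact bound depending on whether $g(p_{i})$ is finite or infinite. Coupled with the regularity of the induced metric (which makes $\omega$ vanish to order exactly $2r$ at each interior pole of $g$ of order $r$) and the canonical-divisor identity $\sum_{p}\ord_{p}(\omega) = 2\gamma - 2$, one deduces the Jorge--Meeks-type estimate
$$ d \;\geq\; \gamma + k - 1. \qquad (\ast\ast) $$
Combining $(\ast)$ and $(\ast\ast)$ with $q = 4$ yields $2(\gamma + k - 1) \leq 2d \leq 2\gamma + k - 2$, hence $k \leq 0$, contradicting $k \geq 1$ (a closed Riemann surface admits no minimal immersion into $\R^{3}$, so at least one puncture must be present). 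Therefore $q \leq 3$.

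The main obstacle is the sharp bound $(\ast\ast)$. A naive puncture-by-puncture analysis of $\ord_{p_{i}}(\omega)$ only yields the weaker estimate $d \geq \gamma - 1 + k/2$, which is off by exactly a factor of two and would leave the critical case $q = 4$ as a borderline equality rather than a contradiction. Sharpening to $(\ast\ast)$ requires either invoking the Jorge--Meeks multiplicity formula $d = \gamma - 1 + \sum_{i=1}^{k} I_{i}$ with end multiplicities $I_{i} \geq 1$, or a more delicate argument exploiting the vanishing of real periods of the Weierstrass $1$-forms $\phi_{j}$ around each puncture.
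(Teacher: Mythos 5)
Your argument is correct and is essentially the proof the paper gives for the refined Theorem \ref{thm4-2-3} (from which Theorem \ref{thm4-2-2} follows immediately, since $R>1$ forces $D_g<4$): the upper bound via Riemann--Hurwitz and the count of preimages of omitted values at the punctures is the same computation, and the crucial lower bound $(\ast\ast)$, $d\geq \gamma+k-1$, is obtained there by exactly the second route you mention --- the period condition forces $\omega$ to have a pole of order $\mu_j\geq 2$ at each end $p_j$, and the canonical-divisor identity $2d-\sum_j\mu_j=2\gamma-2$ then gives $d=\gamma-1+\tfrac{1}{2}\sum_j\mu_j\geq\gamma-1+k$. So the one step you left as a black box is precisely the one the paper supplies, and your reduction from $(\ast)$ and $(\ast\ast)$ to $q\leq 3$ is sound.
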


The author, Kobayashi and Miyaoka refined Theorem \ref{thm4-2-2} and give the following estimate for the number of exceptional values of the Gauss map of a complete minimal surface 
of finite total curvature in ${\R}^{3}$. 

\begin{theorem}\cite[Theorem 3.3]{KKM2008}\label{thm4-2-3}
Let $X\colon \Sigma= \overline{\Sigma}_{\gamma}\backslash \{ p_{1},\ldots ,p_{k} \}\to {\R}^{3}$ be a nonflat complete minimal surface 
of finite total curvature, $g\colon \Sigma\to \RC$ its Gauss map, $d$ the degree of $g$ considered as a map on $\overline{\Sigma}_{\gamma}$. Then the number $D_{g}$ of exceptinal values of $g$ satisfies 
\begin{equation}\label{eq4-2-1}
D_{g}\leq 2+\dfrac{2}{R}, \qquad R=\dfrac{d}{\gamma -1+(k/2)}> 1. 
\end{equation}
\end{theorem}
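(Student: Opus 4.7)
The plan is to exploit the algebraic structure provided by Theorem \ref{thm-4-2-1}: under finite total curvature the Gauss map extends to a holomorphic map $g \colon \overline{\Sigma}_{\gamma} \to \RC$ of degree $d$, so the classical Riemann--Hurwitz formula is available. The crucial observation is that if $\alpha \in \RC$ is an exceptional value of $g$ on $\Sigma$, then every preimage of $\alpha$ on the compactification $\overline{\Sigma}_{\gamma}$ must be an end, i.e.\ $g^{-1}(\alpha) \subset \{p_{1},\ldots,p_{k}\}$. This funnels the preimages of all $D_{g}$ exceptional values into the $k$-point end set and forces a large amount of ramification.

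Concretely, Riemann--Hurwitz applied to $g\colon \overline{\Sigma}_{\gamma} \to \RC$ (with target genus $0$) gives total branching
$$
B := \sum_{p \in \overline{\Sigma}_{\gamma}}(e_{p} - 1) = 2d + 2\gamma - 2,
$$
where $e_{p}$ denotes the local degree of $g$ at $p$. For each exceptional value $\alpha_{i}$ ($i=1,\ldots,D_{g}$), set $k_{i} := \#g^{-1}(\alpha_{i})$; since the multiplicities in the fibre sum to $d$, the ramification contribution over $\alpha_{i}$ is exactly $d - k_{i}$. The fibres $g^{-1}(\alpha_{1}), \ldots, g^{-1}(\alpha_{D_{g}})$ are pairwise disjoint subsets of $\{p_{1},\ldots,p_{k}\}$, hence $\sum_{i} k_{i} \le k$. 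Therefore
$$
D_{g}\,d - k \;\le\; \sum_{i=1}^{D_{g}}(d - k_{i}) \;\le\; B \;=\; 2d + 2\gamma - 2,
$$
which rearranges to $D_{g} \le 2 + (2\gamma - 2 + k)/d = 2 + 2/R$, as desired.

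To verify the strict inequality $R > 1$, i.e.\ $d > \gamma - 1 + k/2$, I would invoke the Jorge--Meeks degree estimate $d \ge \gamma + k - 1$, valid for any complete algebraic minimal surface in $\R^{3}$, combined with $k \ge 1$ (no compact minimal surface exists in $\R^{3}$); these together give $d \ge \gamma + k - 1 > \gamma - 1 + k/2$. No substantive analytic obstacle arises in this proof: it is essentially a short piece of bookkeeping once Theorem \ref{thm-4-2-1} is in hand. The only subtle point is recognising that $\sum_{i} k_{i}$ is bounded by $k$ rather than by $D_{g}\cdot k$, which reflects the disjointness of fibres above distinct exceptional values; this disjointness is what makes the bound sharper than Fujimoto's uniform bound $D_{g}\le 4$ precisely when $R>1$.
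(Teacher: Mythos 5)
Your argument is correct and is essentially the paper's own proof: the bound $D_{g}\le 2+2/R$ comes from the Riemann--Hurwitz formula combined with the observation that the preimages of the omitted values must lie among the $k$ ends (your $\sum_i k_i\le k$ is the paper's $k\ge dD_g-n_0$ with $n_0\le n_g=2(d+\gamma-1)$). The only cosmetic difference is that you establish $R>1$ by citing the Jorge--Meeks/Osserman estimate $d\ge\gamma+k-1$, whereas the paper rederives that same inequality on the spot: completeness and the period condition force $\omega$ to have a pole of order $\mu_j\ge 2$ at each end, so the canonical-divisor count $2d-\sum_{j}\mu_j=2\gamma-2$ yields $d\ge\gamma-1+k>\gamma-1+(k/2)$.
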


\begin{proof}
By a suitable rotation of the surface in ${\R}^{3}$, we may assume that the Gauss map $g$ has neither zero nor pole at $p_{j}$ 
and that the zeros and poles of $g$ are simple. The simple poles of $g$ coincide with the double zeros of $\omega$ because 
the surface satifies the regularity condition (R). By the completeness of the surface, 
$\omega$ has a pole at each end $p_{j}$ (\cite{Ma1963}, 
\cite[Lemma 9.6]{Os1986}). Moreover, since the surface satisfies the period condition (P), 
$\omega$ has a pole of order ${\mu}_{j}\geq 2$ at $p_{j}$ (\cite{Os1964}). 
Applying the Riemann-Roch theorem to $\omega$ on $\overline{\Sigma}_{\gamma}$, we obtain that 
$$
\displaystyle 2d-\sum_{j=1}^{k}{\mu}_{j} = 2\gamma -2.  
$$
Thus we have
\begin{equation}\label{eq4-2-2}
\displaystyle d=\gamma -1+\dfrac{1}{2}\sum_{j=1}^{k}{\mu}_{j}\geq \gamma -1+k > \gamma -1+(k/2), 
\end{equation}
and $R>1$. 

On the other hand, we assume that $g$ omits $D_{g}$ values. Let $n_{0}$ be the sum of the branching orders at the image of 
exceptional values. Then we have 
$$
k\geq dD_{g} -n_{0}. 
$$
Let $n_{g}$ be the total branching order of $g$ on $\overline{\Sigma}_{\gamma}$. Then applying the Riemann-Hurwitz formula to 
the meromorphic function $g$ on $\overline{\Sigma}_{\gamma}$, we have
\begin{equation}\label{eq4-2-3}
n_{g}=2(d+\gamma -1). 
\end{equation}
Hence we have
$$
D_{g}\leq \dfrac{n_{0}+k}{d}\leq \dfrac{n_{g}+k}{d}=2+\dfrac{2}{R}. 
$$
\end{proof}

\begin{remark}\label{rmk4-2-4}
More precisely, (\ref{eq4-2-1}) holds for the totally ramified value number ${\nu}_{g}$ for the Gauss map of 
a complete minimal surface of finite total curvature in ${\R}^{3}$. One of the most important results for the number is to discover 
nonflat complete minimal surfaces of finite total curvature in ${\R}^{3}$ with ${\nu}_{g}=2.5$. For the details, see \cite{Ka2006}. 
\end{remark}

By the proof of Theorem \ref{thm4-2-3}, we reveal that the reason why the upper bound for $D_{g}$ changes from `4' to `3' is that 
the order $\mu_{j}$ of a pole of $\omega$ at each end $p_{j}$ changes from `$\mu_{j}\ge 1$' to `$\mu_{j}\ge 2$'. 
Remark that this principle is equivalent to the distinction between the Cohn-Vossen inequality and the Osserman inequality on the total 
curvature of a complete minimal surface of finite total curvature in ${\R}^{3}$. 

There still remains the following question. 

\begin{problem}[\cite{Os1964}]\label{pro4-2-5}
Does there exist a complete minimal surface of finite total curvature in ${\R}^{3}$ whose Gauss map 
omits 3 values?
\end{problem} 
If so, Theorems \ref{thm4-2-2} and \ref{thm4-2-3} are optimal. If not, the maximum is `2' and is attained by the catenoid 
and examples constructed by Miyaoka and Sato \cite{MS1994}. In regards to this problem, the following facts are well-known. 

\begin{proposition}\label{pro4-2-6}
For a nonflat complete minimal surface of finite total curvature in ${\R}^{3}$, 
\begin{enumerate}
\item[(i)] When $\gamma =0$, the Gauss map omits at most 2 values $($\cite{Os1964}, \cite{KKM2008}$)$, 
\item[(i\hspace{-.1em}i)] When $\gamma =1$ and the surface has a non-embedded end, the Gauss map omits at most 2 values 
$($\cite{Ga1976}, \cite{Fa1993}, \cite{KKM2008}$)$, 
\item[(i\hspace{-.1em}i\hspace{-.1em}i)] If the Gauss map omits 3 values, then $\gamma \geq 1$ and 
the total curvature $\tau (\Sigma)\leq -20\pi$ $($\cite{Os1964}, \cite{WX1987}, \cite{Fa1993}$)$. 
\end{enumerate} 
\end{proposition}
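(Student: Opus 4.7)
The plan is to reduce everything to the inequality in Theorem \ref{thm4-2-3} together with the identity (\ref{eq4-2-2}) derived inside its proof. Recall that Theorem \ref{thm4-2-3} can be rewritten as
$$D_{g} \leq 2 + \frac{2}{R} = 2 + \frac{2\gamma - 2 + k}{d},$$
and that (\ref{eq4-2-2}) reads $d = \gamma - 1 + \tfrac{1}{2}\sum_{j=1}^{k}\mu_{j}$, so $d \geq \gamma - 1 + k$, with equality exactly when every end is embedded ($\mu_{j}=2$ for all $j$). Both (i) and (ii) will follow by plugging in the genus and refining the $\mu_{j}$ information; (iii) combines these with Osserman's formula $\tau(\Sigma) = -4\pi d$.

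For (i), setting $\gamma = 0$ gives $D_{g}\leq 2 + (k-2)/d$. For a nonflat complete minimal surface of finite total curvature with $\gamma=0$ we must have $k \geq 2$, and then $d \geq k-1$, so $D_{g} \leq 2 + (k-2)/(k-1) < 3$; integrality of $D_{g}$ forces $D_{g} \leq 2$. For (ii), suppose $\gamma = 1$ and some end $p_{j_{0}}$ is non-embedded, so $\mu_{j_{0}} \geq 3$. Then $\sum_{j}\mu_{j}\geq 2k+1$, and since $\sum_{j}\mu_{j}=2d$ is even we upgrade to $\sum_{j}\mu_{j}\geq 2k+2$, i.e. $d \geq k+1$. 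Substituting back yields $D_{g} \leq 2 + k/d \leq 2 + k/(k+1) < 3$, hence $D_{g}\leq 2$.

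For (iii), assume $D_{g} = 3$. The inequality now gives $d \leq 2\gamma - 2 + k$, and comparing with $d \geq \gamma - 1 + k$ immediately forces $\gamma \geq 1$, settling the first assertion. For the total curvature bound, observe that by Osserman, $\tau(\Sigma) = -4\pi d$, so $\tau(\Sigma) \leq -20\pi$ is equivalent to $d\geq 5$. By (ii), the case $\gamma=1$ reduces to $d=k$ with all ends embedded; Riemann--Hurwitz $n_{g}=2d$ shows that every branch point of $g$ on $\overline{\Sigma}_{1}$ must lie among the $k$ ends and in the preimage of one of the three exceptional values, which pins down the branching pattern at each end very rigidly. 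The period condition (P), imposed on the Weierstrass data $(\omega,g)$ with the $\mu_{j}$ now fixed, must then be used to exclude each of the low-degree candidates $d\in\{2,3,4\}$ (this is essentially the content of \cite{WX1987} and \cite{Fa1993}). A parallel analysis, using the sharper constraints on $\sum\mu_{j}$ coming from $d \leq 2\gamma - 2 + k$, handles the cases $\gamma \geq 2$ with small $d$.

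The main obstacle is exactly this last step. The easy half of (iii)---the conclusion $\gamma \geq 1$---is immediate from Theorem \ref{thm4-2-3} alone, but the estimate $d\geq 5$ cannot be read off from it: for instance, Theorem \ref{thm4-2-3} is perfectly consistent with $(\gamma,k,d)=(1,3,3)$ or $(2,3,4)$, which would give $\tau(\Sigma) = -12\pi$ or $-16\pi$. Eliminating these and similar low-degree configurations is the genuinely difficult part and is where one must move beyond the Riemann--Hurwitz bookkeeping built into Theorem \ref{thm4-2-3} and invoke the global structure of the Weierstrass representation, in particular the period condition (P).
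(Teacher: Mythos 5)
The paper itself offers no proof of this proposition---it is stated as a survey item with citations---so your derivation of (i), (ii) and the genus claim in (iii) from Theorem \ref{thm4-2-3} is the right move and is essentially the argument of \cite{KKM2008}. The bookkeeping checks out: writing the bound as $D_{g}\leq 2+(2\gamma-2+k)/d$ and feeding in $d=\gamma-1+\tfrac12\sum\mu_{j}$ with $\mu_{j}\geq 2$ gives (i); the parity upgrade $\sum\mu_{j}\geq 2k+2$ when some $\mu_{j_{0}}\geq 3$ gives (ii); and comparing $d\leq 2\gamma-2+k$ with $d\geq\gamma-1+k$ gives $\gamma\geq 1$. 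Two small caveats. First, your claim that $\gamma=0$ forces $k\geq 2$ is false (Enneper's surface has $\gamma=0$, $k=1$), but this is harmless since $k=1$ gives the even better bound $D_{g}\leq 2-1/d$. Second, the identification of ``non-embedded end'' with $\mu_{j_{0}}\geq 3$ uses the Jorge--Meeks description of ends (the end at $p_{j}$ has multiplicity $\mu_{j}-1$ under the normalization in the proof of Theorem \ref{thm4-2-3}); this is standard but is not in the paper and deserves a citation.

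The genuine gap is the one you yourself flag: the estimate $\tau(\Sigma)\leq-20\pi$, equivalently $d\geq 5$, in (iii). Your Riemann--Hurwitz analysis correctly pins down the configuration when $\gamma=1$ and $D_{g}=3$ (namely $d=k$, all ends embedded, all branching of $g$ concentrated over the three omitted values at the ends), but Theorem \ref{thm4-2-3} is, as you note, consistent with $(\gamma,k,d)=(1,3,3)$ and the like. Excluding these low-degree configurations is not a refinement of the divisor count; it requires the function-theoretic and period-condition arguments of \cite{WX1987} (which gives $\tau\leq-16\pi$) and \cite{Fa1993} (which sharpens this to $-20\pi$). So your proposal proves (i), (ii) and the first assertion of (iii), and correctly identifies---but does not close---the remaining hard step, which must be imported from those references.
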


By virtue of Theorem \ref{thm4-2-3} and this proposition, if there exists a complete minimal surface of 
finite total curvature whose Gauss map omits 3 values, then it has the complexity of topological data. 
Thus it is very hard to solve the period condition (P) of the surface. 
This is the difficulty of this problem. 


\end{document}